\newtheorem{theorem}{Theorem}[section]
\newtheorem{corollary}[theorem]{Corollary}
\newtheorem{lemma}[theorem]{Lemma}
\newtheorem{proposition}[theorem]{Proposition}
\theoremstyle{definition}
\newtheorem{remark}[theorem]{Remark}
\newtheorem{question}[theorem]{Question}
\numberwithin{equation}{section}
\newcommand{\R}{\mathbb{R}}
\newcommand{\MP}{\mathcal{P}}
\renewcommand{\Re}{\operatorname{Re}}
\newcommand{\I}{\mathrm{i}}
\newcommand{\e}{\mathrm{e}}
\newcommand{\jm}{j_{\text{max}}}
\newcommand{\eps}{\varepsilon}
\DeclareMathOperator{\Li}{Li}
\DeclareMathOperator{\li}{li}
\DeclareMathOperator{\Var}{Var}
\begin{document}

\title[A new generalized prime random approximation procedure]{A new generalized prime random approximation procedure and some of its applications}

\author[F. Broucke]{Frederik Broucke}
\thanks{F. Broucke was supported by the Ghent University BOF-grant 01J04017}

\author[J. Vindas]{Jasson Vindas}
\thanks {J. Vindas was partly supported by Ghent University through the BOF-grant 01J04017 and by the Research Foundation--Flanders through the FWO-grants 1510119N and G067621N}

\address{Department of Mathematics: Analysis, Logic and Discrete Mathematics\\ Ghent University\\ Krijgslaan 281\\ 9000 Gent\\ Belgium}
\email{fabrouck.broucke@ugent.be}
\email{jasson.vindas@ugent.be}

\subjclass[2020]{11M41, 11N80}
\keywords{Discrete random approximation; Diamond-Montgomery-Vorhauer-Zhang probabilistic method; Dirichlet series with a unique zero in half plane of convergence; Well-behaved Beurling primes and integers; Beurling integers with large oscillation; Riemann hypothesis for Beurling numbers}

\begin{abstract}
We present a new random approximation method that yields the existence of a discrete Beurling prime system $\mathcal{P}=\{p_{1}, p_{2}, \dotso\}$ which is very close in a certain precise sense to a given non-decreasing, right-continuous, nonnegative, and unbounded function $F$. This discretization procedure improves an earlier discrete random approximation method due to H.\ Diamond, H.\ Montgomery, and U.\ Vorhauer [Math. Ann. 334 (2006), 1--36], and refined by W.-B.\ Zhang [Math. Ann. 337 (2007), 671--704]. 

We obtain several applications. Our new method is applied to a question posed by M.\ Balazard concerning Dirichlet series with a unique zero in their half plane of convergence, to construct examples of very well-behaved generalized number systems that solve a recent open question raised by  T.\ Hilberdink and A.\ Neamah in [Int.\ J.\ Number Theory 16 05 (2020), 1005--1011], and to improve the main result from [Adv.\ Math.\ 370 (2020), Article 107240], where a Beurling prime system with regular primes but extremely irregular integers was constructed.
\end{abstract}

\maketitle

%%%%%%%%%%%%%%%%%%%%%%%%%%%%%%%%%%%%%%%%%%%%%%%%%%%%%%%%%%%%%%%%%%%%%%%%%%%%%
% INTRODUCTION
%%%%%%%%%%%%%%%%%%%%%%%%%%%%%%%%%%%%%%%%%%%%%%%%%%%%%%%%%%%%%%%%%%%%%%%%%%%%%
\section{Introduction}

In their seminal work \cite{DiamondMontgomeryVorhauer}, Diamond, Montgomery, and Vorhauer established the optimality of Landau's abstract prime number theorem\footnote{Landau's original statement is the well-known Prime Ideal Theorem, but his reasoning essentially leads to the first ever known abstract PNT \cite{BatemanDiamond,Knopfmacher}.  } \cite{Landau1903}, partly solving so a long-standing conjecture of Bateman and Diamond \cite[Conjecture 13B, p.~199]{BatemanDiamond}. One of the cornerstones in their arguments is a probabilistic construction, which they developed in order to produce discrete approximations to `continuous prime distribution functions' by random generalized primes.  Refinements were obtained by Zhang in \cite{Zhang2007} (cf. \cite{DiamondZhangbook}). Their discrete random approximation result, from now on referred to as the DMVZ-method, may be summarized as follows.

\begin{theorem}[Diamond, Montgomery, Vorhauer \cite{DiamondMontgomeryVorhauer}, Zhang \cite{Zhang2007}]
\label{th: DMVZ-method}
Let $f$ be a non-negative $L^{1}_{loc}$-function supported on $[1,\infty)$ satisfying 
\begin{equation}
\label{assumptions DMVZ-method}	f(u) \ll \frac{1}{\log u} \quad \text{and} \quad \int_{1}^{\infty}f(u)\dif u = \infty.
\end{equation}
Then there exists an unbounded sequence of real numbers $1<p_1<p_2<\dots<p_j<\dots$ such that for any real $t$ and any $x\ge1$
\begin{equation}
\label{eq: approximation exponential integrals}	\abs[3]{\sum_{p_{j}\le x}p_{j}^{-\I t} - \int_{1}^{x}u^{-\I t}f(u)\dif u} \ll \sqrt{x} + \sqrt{\frac{x\log(\,\abs{t}+1)}{\log(x+1)}}. 
\end{equation}
\end{theorem}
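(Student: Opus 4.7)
The plan is to build $\{p_j\}$ by a random procedure and show that with probability one the resulting realization satisfies \eqref{eq: approximation exponential integrals} uniformly in $x$ and $t$.

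First I would set up the probability space: decompose $[1,\infty)$ into a countable family of disjoint intervals $\{I_k\}_{k\ge 1}$ chosen small enough that $p_k := \int_{I_k}f(u)\dif u \le 1/2$ (the hypothesis $f\ll 1/\log u$ makes this easy to arrange). For each $k$ introduce an independent Bernoulli variable $X_k$ with $\Pr(X_k=1)=p_k$ and, conditional on $X_k=1$, an independent point $Y_k\in I_k$ with density $f/p_k$; the random prime system is $\MP=\{Y_k:X_k=1\}$, relabeled in increasing order as $p_1<p_2<\cdots$. For the random discrepancy
$$D(x,t) := \sum_{p_j\le x} p_j^{-\I t} - \int_1^x u^{-\I t}f(u)\dif u,$$
the choice of density forces $\mathbb{E}D(x,t)=0$ exactly, while the independence of the $X_k$ together with the trivial bound $|X_k Y_k^{-\I t}\mathbf{1}_{Y_k\le x}|\le 1$ yield $\Var D(x,t) \le \sum_{k:\,I_k\cap[1,x]\neq\emptyset} p_k \le \int_1^x f(u)\dif u \ll x/\log x$.

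This pointwise variance bound has the right order of magnitude: $\sqrt{\Var D(x,t)}\asymp\sqrt{x/\log x}$, and the two terms on the right of \eqref{eq: approximation exponential integrals} should emerge as the price of making the estimate uniform. The $\sqrt{x}$ term comes from a $\log x$-factor union bound over an $x$-net, and the $\sqrt{x\log(|t|+1)/\log(x+1)}$ term from a $\log(|t|+1)$-factor union bound over a $t$-net. To drive this, I would establish high-moment estimates $\mathbb{E}|D(x,t)|^{2m}\ll (Cmx/\log x)^m$ by directly expanding $|\sum_k Z_k|^{2m}$ for the independent centred bounded summands $Z_k$ (Rosenthal/Marcinkiewicz--Zygmund-type inequality). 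The derivative bound $|\partial_t D(x,t)|\le \sum_{p_j\le x}\log p_j + \int_1^x\log u\cdot f(u)\dif u \ll x$ controls oscillation in $t$, so a net of spacing $\asymp 1/x$ suffices on $|t|\le T$; a comparable discretization in $x$ uses that $D(\,\cdot\,,t)$ is piecewise constant in $x$ and jumps only at primes, whose count on each short subinterval is itself bounded via the same probabilistic argument.

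The main difficulty---and the heart of the DMVZ method---is tuning the moment order $m$ and the net spacings so that Markov's inequality at each net point produces a failure probability summable over a dyadic family of rectangles $x\asymp 2^K$, $|t|\asymp 2^L$. Working with $m\asymp K+L$ gives failure probability $\ll 2^{-K-L}$, and a Borel--Cantelli argument then yields an $\omega$ for which the bound holds uniformly, producing a deterministic sequence $\{p_j\}$ satisfying \eqref{eq: approximation exponential integrals}. The subtle point is that the two terms on the right of \eqref{eq: approximation exponential integrals} must emerge separately from the two union bounds: the Markov thresholds have to be chosen so that the $x$-discretization absorbs only a $\log x$ factor and the $t$-discretization only $\log(|t|+1)$, rather than the two logarithms multiplying and spoiling the target estimate.
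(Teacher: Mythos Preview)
The paper does not itself prove Theorem~\ref{th: DMVZ-method}; it is quoted from \cite{DiamondMontgomeryVorhauer,Zhang2007}. What the paper does prove is the strictly stronger Theorem~\ref{th: discretization}, whose argument of course yields Theorem~\ref{th: DMVZ-method} as a by-product. So the only meaningful comparison is between your sketch and the proof of Theorem~\ref{th: discretization}.

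Your plan is sound and would give Theorem~\ref{th: DMVZ-method}. The Bernoulli model (small intervals $I_k$ with $f$-mass $p_k\le 1/2$, an independent coin with success probability $p_k$ deciding whether a prime lands in $I_k$) is essentially the original DMVZ construction; the moment bound together with Markov on a net, followed by Borel--Cantelli over dyadic blocks in $(x,t)$, does the job. The concern you raise at the end about the two logarithms ``multiplying'' is not an obstruction: with $m\asymp \log x+\log(|t|+1)$ and threshold $\lambda^2\asymp m\sigma^2\asymp (\log x+\log(|t|+1))\,x/\log x$, one simply uses $\sqrt{a+b}\le\sqrt a+\sqrt b$ to split $\lambda$ into exactly the two terms on the right of \eqref{eq: approximation exponential integrals}.

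The paper's proof of Theorem~\ref{th: discretization} differs from your sketch in two respects. The essential one is the random model: rather than Bernoulli placement, the paper partitions $[1,\infty)$ into intervals $(q_{j-1},q_j]$ each carrying $F$-mass exactly $1$ and drops \emph{exactly one} random prime $P_j$ into each, distributed according to $\dif F|_{(q_{j-1},q_j]}$. This deterministically forces $|\pi_\MP(x)-F(x)|\le 1$, which your Bernoulli model cannot achieve---it only gives $O(\sqrt{x})$, and this is precisely the gap between Theorems~\ref{th: DMVZ-method} and~\ref{th: discretization}. The second difference is purely technical: the paper uses a Kolmogorov/Bernstein exponential inequality (Lemma~\ref{lem: probabilistic inequality}) in place of your moments-plus-Markov, and it handles uniformity more economically, taking $t$ only at integers and $x$ only at the partition points $q_j$, then passing to arbitrary $(x,t)$ by a short integration-by-parts argument (exploiting $|\pi_\MP-F|\le 1$) rather than via fine nets and derivative bounds.
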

The sequence arising from the DMVZ-method might be regarded as a Beurling prime system. Indeed, following Beurling \cite{Beurling1937} (cf. \cite{BatemanDiamond,DiamondZhangbook}), a set of generalized prime numbers is simply an unbounded non-decreasing sequence of real numbers $\MP=\{p_j\}_{j=1}^{\infty}$ subject to the only requirement $p_1>1$. We denote as $\pi_{\MP}(x)$ the function that counts the number of generalized primes not exceeding a given number $x$. The function $f$ can then be interpreted as a template `prime density measure' $\dif F(u)=f(u)\dif u$, whose continuous `prime distribution function' $F(x)=\int_{1}^{x}f(u)\dif u$ is unbounded and satisfies the Chebyshev upper bound $\ll x/\log x$. The importance of the bound \eqref{eq: approximation exponential integrals} lies in the fact that it is often strong enough for transferring many properties from $\exp\left(\int_{1}^{\infty} x^{-s}\dif F(x) \right)$ into desired analytic properties of the Beurling zeta function associated to $\MP$, that is, 
\[
\zeta_{\MP}(s)= \prod^{\infty}_{j=1} (1-p_{j}^{-s})^{-1}.
\]
We refer to the monograph \cite{DiamondZhangbook} and the recent article \cite{BrouckeDebruyneVindas2020} for relevant applications of the DMVZ-method.

The main goal of this paper is to establish a direct improvement to the DMVZ-method by obtaining a significantly stronger bound for the difference $\pi_{\MP}-F$ than the one delivered by Theorem \ref{th: DMVZ-method}. In fact, setting $t=0$ in \eqref{eq: approximation exponential integrals} yields $\pi_{\MP}(x) - F(x)\ll \sqrt{x}$. We will show that it is possible to select the sequence $\MP$ in such way that the much better bound $\pi_{\MP}(x) - F(x)\ll 1$ holds, as stated in the ensuing theorem, our main result. In addition, our discretization procedure can be applied to approximate measures $\dif F$ that are not necessarily absolutely continuous with respect to the Lebesgue measure.

\begin{theorem}
\label{th: discretization}
Let $F$ be a non-decreasing right-continuous function tending to $\infty$ with $F(1) = 0$ and satisfying the Chebyshev upper bound $F(x) \ll x/\log x$. Then there exists a set of generalized primes $\MP=\{p_j\}_{j=0}^{\infty}$  such that $\abs{\pi_{\MP}(x) - F(x)} \le 2$ and such that for any real $t$ and any $x\ge 1$
\begin{equation}
\label{eq: bound exponential sum}
\abs[3]{ \sum_{p_{j}\le x}p_{j}^{-\I t} - \int_{1}^{x}u^{-\I t}\dif F(u) } \ll \sqrt{x} + \sqrt{\frac{x \log(\,\abs{t}+1)}{\log(x+1)}}. 
\end{equation}

If in addition $F$ is continuous, the sequence $\MP$ can be chosen to be  (strictly) increasing and such that $\abs{\pi_{\MP}(x) - F(x)} \le 1$.
\end{theorem}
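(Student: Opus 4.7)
My approach would be a quantile-based randomization. Define the generalized inverse (quantile function)
$$G(s) := \inf\{x \ge 1 : F(x) \ge s\}, \qquad s > 0,$$
so that by right-continuity of $F$ one has the equivalence $G(s) \le x \iff s \le F(x)$, or equivalently the pushforward $G_{*}\mathrm{Leb}$ of Lebesgue measure on $(0,\infty)$ agrees with the Stieltjes measure $\dif F$ on $[1,\infty)$. Since $F(1)=0$, $G(s) > 1$ for $s > 0$. Let $(U_n)_{n\ge 0}$ be i.i.d.\ uniform on $[0,1)$ and set
$$p_{n+1} := G(n + U_n), \qquad n \ge 0.$$
The sequence is non-decreasing because $G$ is monotone and $n+U_n < (n+1)+U_{n+1}$ a.s.; when $F$ is continuous, $G$ has no flat intervals and so the $p_{n+1}$'s are a.s.\ distinct, making $\MP$ strictly increasing.

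For the counting bound, the equivalence above gives $\pi_{\MP}(x) = \#\{n\ge 0 : n+U_n \le F(x)\}$. With $N=\lfloor F(x)\rfloor$ and $\eta = F(x)-N\in[0,1)$, the indices $n<N$ always contribute (since $n+U_n < n+1 \le N \le F(x)$), while $n=N$ contributes precisely when $U_N\le\eta$. Hence $\pi_{\MP}(x)\in\{N,N+1\}$, and so $|\pi_{\MP}(x)-F(x)|\le 1 \le 2$ holds deterministically for every realization, yielding the first part of the theorem and, in the continuous case, the sharper bound $|\pi_{\MP}-F|\le 1$.

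For the exponential sum, fix $t\in\R$ and write $\Phi_t(s):=G(s)^{-\I t}$. The change of variables $u=G(s)$ yields $\int_1^x u^{-\I t}\dif F(u) = \int_0^{F(x)}\Phi_t(s)\dif s$, and combined with the description of $\pi_{\MP}$ above this gives, with $N=\lfloor F(x)\rfloor$,
$$S(x,t) := \sum_{p_j\le x} p_j^{-\I t} - \int_1^x u^{-\I t}\dif F(u) = \sum_{n=0}^{N-1} X_n(t) + R(x,t),$$
where $X_n(t) := \Phi_t(n+U_n)-\int_n^{n+1}\Phi_t(s)\dif s$ and $|R(x,t)|\le 2$. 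Since the $U_n$ are independent, the $X_n(t)$ are independent, mean zero, and uniformly bounded by $2$. For fixed $(x,t)$, the maximal Azuma-Hoeffding inequality applied to the martingale $M\mapsto \sum_{n<M}X_n(t)$ gives $\max_{M\le N}\bigl|\sum_{n<M}X_n(t)\bigr| \ll \sqrt{N\log(1/\delta)}$ with probability at least $1-\delta$, and the Chebyshev upper bound $N \le F(x) \ll x/\log x$ converts this into $\ll \sqrt{(x/\log x)\log(1/\delta)}$.

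The main obstacle, and the genuinely delicate part, is to render this pointwise estimate simultaneously uniform in $x$ and $t$. For $x$, the maximal form of Azuma-Hoeffding already covers every $x$ in a dyadic block $[2^k,2^{k+1})$. For $t$, the derivative $|\partial_t X_n(t)|\le 2\log G(n+1)\le 2\log x$ makes $\sum_{n<N} X_n(\cdot)$ Lipschitz on $|t|\le T$ with constant $\ll N\log(2^{k+1}) \ll 2^k$, so a $t$-net of spacing $\asymp 2^{-k}$ introduces only $O(1)$ interpolation error. I would then apply a union bound over this $(x,t)$-net with per-point failure probability $\delta \asymp \bigl((|t|+2)(x+2)\bigr)^{-C}$ for a sufficiently large $C$; these probabilities are summable over dyadic scales in both variables, while $\log(1/\delta) \ll \log x + \log(|t|+1)$. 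Plugging into the Azuma bound produces the estimate $\sqrt{(x/\log x)(\log x + \log(|t|+1))} \ll \sqrt{x} + \sqrt{x\log(|t|+1)/\log(x+1)}$, which is exactly \eqref{eq: bound exponential sum}. A Borel-Cantelli argument on the summable failure events then furnishes a full-measure set of realizations on which \eqref{eq: bound exponential sum} holds for all $x,t$ at once, and any such realization supplies the desired deterministic $\MP$. This chaining/union-bound bookkeeping parallels the DMVZ/Zhang argument, but with the deterministic bound $|\pi_{\MP}-F|\le 2$ now hard-wired into the quantile construction itself rather than obtained as an a posteriori consequence.
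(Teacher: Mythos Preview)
Your proposal is correct, and the core randomization is in fact the same as the paper's: your $p_{n+1}=G(n+U_n)$ is exactly the paper's random variable $P_{n+1}$ distributed on $(q_n,q_{n+1}]$ according to $\dif F$, just expressed through the quantile function. The differences lie in execution. First, your quantile formulation handles general $F$ in one stroke and yields $|\pi_{\MP}-F|\le 1$ deterministically, whereas the paper decomposes $\dif F=\dif F_c+\dif F_d$, discretizes each piece separately (with some bookkeeping via auxiliary weights $\beta_j,\gamma_j$ in the discrete case), and combines the two $\le 1$ bounds to get $\le 2$; your route is cleaner and, incidentally, slightly sharper. Second, for concentration you invoke maximal Azuma--Hoeffding while the paper uses a Bernstein-type inequality (their Lemma~\ref{lem: probabilistic inequality}); for bounded summands with $\sigma^2\asymp N$ these are interchangeable. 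Third, and this is the most genuine divergence, for uniformity in $t$ you use the Lipschitz bound $|\partial_t\sum_{n<N}X_n(t)|\ll N\log x\ll x$ together with a $2^{-k}$-net and a union bound, whereas the paper proves the estimate only at integer $t=k$ via Borel--Cantelli and then passes to $t\in[k,k+1]$ by the integration-by-parts identity $S(x,t)=\int_1^{x^+}u^{-\I(t-k)}\dif S(u,k)$. Both yield the same bound; the paper's device avoids the net-counting entirely, while your chaining argument is more robust and would adapt more readily to test functions other than $u^{-\I t}$.
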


The proof of Theorem \ref{th: discretization} will be given in Section \ref{sec: the main result}. The essential difference between the DMVZ probabilistic scheme and our proof is that we make a completely different choice of how the generalized prime random variables are distributed in order to generate the discrete random approximations, allowing for a more accurate control on the size of the difference $\pi_{\MP}(x)-F(x)$. 

The rest of the article is devoted to illustrating the usefulness of Theorem \ref{th: discretization} through three applications. In all these applications, the stronger bound $\pi_{\MP}(x)-F(x) \ll1$ instead of $\pi_{\MP}(x)-F(x)\ll \sqrt{x}$ plays a crucial role. Our first application concerns a question posed by M.\ Balazard (we consider a strengthened version of \cite[Open Problem 24]{MontgomeryVorhauer2000}):
\begin{question}
\label{Balazard's question}
Does there exist a Dirichlet series $\sum_{n=1}^{\infty} a_n n^{-s}$ which has exactly one zero in its half plane of convergence?
\end{question}
This question is motivated by the fact that if the Riemann hypothesis is true, the Dirichlet series
\begin{equation}
\label{eq: classical Moebius}
\sum_{n=1}^{\infty} \frac{\mu(n)}{n^{s}}=\frac{1}{\zeta(s)},
\end{equation}
 where $\mu$ is the (classical) M\"obius function, provides an example of such a Dirichlet series, since it would have a unique zero, namely at $s=1$, in its half plane of convergence $\{s:\: \Re s >1/2\}$. The idea is of course to find an unconditional example. We are not able to answer Question \ref{Balazard's question} here for Dirichlet series as in its statement, but, armed with Theorem \ref{th: discretization}, we will prove that Balazard's question can be affirmatively answered for \emph{general} Dirichlet series.
 
 \begin{proposition}
 \label{prop: Balazard's question}
  There are an unbounded sequence $1=n_0<n_1\leq n_{2}\leq \dots \leq n_{k}\leq \dots $ and a general Dirichlet series of the form 
 \[D(s)=\sum_{k=0}^{\infty} \frac{a_{k}}{n_{k}^{s}}, \qquad \mbox{with } a_k\in \{-1,0,1\},
\]  such that $D(s)$ has abscissa of convergence $\sigma_{c}=1/2$ and has a unique zero on $\{s:\: \Re s >1/2\}$, which is located at $s=1$.
 \end{proposition}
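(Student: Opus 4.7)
The strategy is to realize $D(s)$ as the reciprocal
\[
D(s)=\frac{1}{\zeta_{\MP}(s)}=\sum_{k=0}^{\infty}\frac{\mu_{\MP}(n_{k})}{n_{k}^{s}}
\]
of a Beurling zeta function, where $\MP=\{p_{j}\}$ is produced by Theorem \ref{th: discretization} from a carefully chosen template $F$, and where $\{n_{k}\}_{k=0}^{\infty}$ is the associated set of Beurling integers. Since $\mu_{\MP}$ takes values in $\{-1,0,1\}$, the coefficients $a_{k}=\mu_{\MP}(n_{k})$ automatically have the required form. The task therefore reduces to picking a non-decreasing, right-continuous $F$ with $F(1)=0$ and $F(x)\ll x/\log x$ such that its ``continuous zeta''
\[
Z(s)=\exp\left(\int_{1}^{\infty} x^{-s}\dif F(x)\right), \qquad \Re s>1,
\]
extends meromorphically to $\{\Re s>1/2\}$ with a simple pole at $s=1$, no further poles, no zeros in $\{\Re s>1/2\}\setminus\{1\}$ off the critical line, and at least one zero located on $\Re s=1/2$. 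A natural candidate is to prescribe $Z(s)$ to have the shape $(s-1/2)(s-1)^{-1}E(s)$ with $E$ analytic, non-vanishing and suitably bounded in $\{\Re s>1/2\}$, and then to recover the measure $\dif F$ by Mellin inversion of $\log Z(s)/s$, verifying non-negativity together with the Chebyshev bound.

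Once such an $F$ is fixed, Theorem \ref{th: discretization} yields primes $\MP$ with $|\pi_{\MP}(x)-F(x)|\le 2$ and the exponential-sum control \eqref{eq: bound exponential sum}. Partial summation applied to \eqref{eq: bound exponential sum} (crucially exploiting the stronger $O(1)$ bound on $\pi_{\MP}-F$ rather than just $O(\sqrt{x})$) shows that the difference $\sum_{j} p_{j}^{-s}-\int_{1}^{\infty} x^{-s}\dif F(x)$ extends to an analytic function on $\{\Re s>1/2\}$. Combined with the higher prime-power contributions $\sum_{k\ge 2}\sum_{j}p_{j}^{-ks}/k$ (which are analytic in $\{\Re s>1/2\}$ because the Chebyshev bound forces $p_{j}\gg j\log j$), this yields $\log\zeta_{\MP}(s)=\log Z(s)+H(s)$ for some $H$ analytic in $\{\Re s>1/2\}$. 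Hence $\zeta_{\MP}$ inherits the zero/pole structure of $Z$ on this half-plane: a simple pole at $s=1$, no other poles or zeros in $\{\Re s>1/2\}\setminus\{1\}$ except at least one zero on $\Re s=1/2$. Consequently $D(s)=1/\zeta_{\MP}(s)$ extends analytically to $\{\Re s>1/2\}$, with its only zero in this region located at $s=1$.

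It remains to identify the abscissa of convergence as $\sigma_{c}=1/2$. The zero of $\zeta_{\MP}$ on the critical line produces a singularity of $D(s)$ there, forcing $\sigma_{c}\ge 1/2$. For the matching upper bound, \eqref{eq: bound exponential sum} provides polynomial-in-$|t|$ control of $\zeta_{\MP}$ and $1/\zeta_{\MP}$ on vertical lines inside $\{\Re s>1/2\}$; a standard Perron-type contour shift then gives $M_{\MP}(x):=\sum_{n_{k}\le x}\mu_{\MP}(n_{k})\ll x^{1/2+\eps}$ for every $\eps>0$, whence $D(s)$ converges throughout $\{\Re s>1/2\}$. The genuinely delicate step, and the main obstacle, is the construction of $F$ at the very first stage: engineering $Z$ so that it has the prescribed meromorphic structure in $\{\Re s>1/2\}$---in particular forcing an unconditional zero on the critical line---while simultaneously keeping $\dif F$ a non-negative measure compatible with the Chebyshev bound. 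Once this is in hand, the remainder is a relatively routine combination of Theorem \ref{th: discretization} with standard Beurling analytic machinery.
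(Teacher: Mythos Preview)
Your plan has a real gap at exactly the point you flag as ``the main obstacle,'' and the paper avoids this obstacle entirely rather than overcoming it. You propose to force $\sigma_c\ge 1/2$ by engineering a zero of the template zeta $Z(s)=\exp(\int x^{-s}\dif F)$ on the line $\Re s=1/2$ and then arguing that $\zeta_{\MP}$ inherits it. Two problems: first, since $Z=\exp(\cdot)$ is nowhere zero on any region where $\int x^{-s}\dif F$ extends analytically, producing a zero of $Z$ on $\Re s=1/2$ really means producing a singularity of $\int x^{-s}\dif F$ there, and your concrete ansatz $Z(s)=(s-1/2)(s-1)^{-1}E(s)$ puts this at the \emph{real} point $s=1/2$, which is incompatible with $\dif F\ge 0$ (for real $\sigma>1$ the integral is non-negative, and no choice of $F$ with Chebyshev bound will make $\log Z(\sigma)\to-\infty$ as $\sigma\downarrow 1/2$). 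Second, even with a zero of $Z$ at some $1/2+\I t_0$, the relation $\log\zeta_{\MP}=\log Z+H$ only holds on the open half-plane $\sigma>1/2$; the prime-square contribution $\tfrac12\sum_j p_j^{-2s}$ in $H$ diverges as $\sigma\downarrow 1/2$, so boundary behavior of $H$ is not controlled and inheritance of the zero is not automatic. Also, the $O(1)$ bound is \emph{not} what gives analyticity of $\sum p_j^{-s}-\int x^{-s}\dif F$ on $\sigma>1/2$; the $O(\sqrt{x})$ bound already suffices for that via partial summation.

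The paper's route is different and much cleaner. It applies Theorem~\ref{th: discretization} with the specific template $F=\li$ (defined by $\Li(x)=\sum_{\nu\ge1}\li(x^{1/\nu})/\nu$), so that $\pi_{\MP}(x)=\li(x)+O(1)$ and hence $\Pi_{\MP}(x)=\Li(x)+O(\log\log x)$. This makes $\log\zeta_{\MP}(s)-\log\bigl(s/(s-1)\bigr)$ analytic on all of $\sigma>0$; in particular $\zeta_{\MP}$ has \emph{no} zeros in $\sigma>0$. The exponential-sum bound \eqref{eq: bound exponential sum} then yields \eqref{eq: bound logzeta sigma>1/2}, and a Perron inversion gives $M_{\MP}(x)\ll x^{1/2}\exp(c(\log x)^{2/3})$, so $\sigma_c\le 1/2$. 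The crucial lower bound $\sigma_c\ge 1/2$ does \emph{not} come from any zero of $\zeta_{\MP}$; it comes from the Hilberdink--Neamah theorem for discrete Beurling systems: since $\alpha=0$ (this is where the $O(1)$ bound from Theorem~\ref{th: discretization} is genuinely essential), at least two of $\alpha,\beta,\gamma$ must equal $\Theta=\max\{\alpha,\beta,\gamma\}\ge 1/2$, forcing $\gamma=1/2$ and hence $M_{\MP}(x)=\Omega_\varepsilon(x^{1/2-\varepsilon})$. So the step you regard as the hard one is simply absent from the actual proof.
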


Our example of a general Dirichlet series satisfying the requirements of Proposition \ref{prop: Balazard's question} arises from a Beurling prime system that we shall construct in Section \ref{sec: First application}. This example is actually the Beurling analog of the Dirichlet series \eqref{eq: classical Moebius}. It turns out that the same constructed generalized primes yield a second application, as this generalized number system also provides a positive answer to a recent open problem raised by Hilberdink and Neamah (cf. \cite[Section 4. Open Problem (1)]{NeamahHilberdink}) on the existence of well-behaved Beurling number systems of a certain best possible type; see Section \ref{sec: First application} for details.

As a third application, we conclude this article with an improvement to a result recently established by the authors together with G.\ Debruyne \cite[Theorem 1.1]{BrouckeDebruyneVindas2020}. In that paper, the authors showed the existence of a prime system with regular primes (i.e., satisfying the RH in the form of the PNT with remainder $O(\sqrt{x})$) but with integers displaying extreme oscillation. Using Theorem \ref{th: discretization}, we will be able to improve in Section \ref{sec: third application} the regularity of the primes from an $O(\sqrt{x})$ error term to one of order $O(1)$, while still keeping the same irregularity of the integers.

%%%%%%%%%%%%%%%%%%%%%%%%%%%%%%%%%%%%%%%%%%%%%%%%%%%%%%%%%%%%%%%%%%%%%%%%%%%%%
% THE MAIN RESULT
%%%%%%%%%%%%%%%%%%%%%%%%%%%%%%%%%%%%%%%%%%%%%%%%%%%%%%%%%%%%%%%%%%%%%%%%%%%%%
\section{The main result}
\label{sec: the main result}

This section is devoted to a proof of Theorem \ref{th: discretization}. Like in the DMVZ-method, our starting point is a probabilistic inequality for sums of random variables essentially due to Kolmogorov (see e.g.\ \cite[Chapter V]{Loeve}), which we shall employ to bound the probability of certain events. The following inequality is a slight variant of \cite[Lemma 8]{DiamondMontgomeryVorhauer}, and the proof given there can readily  be adapted to yield the ensuing form of the lemma.
\begin{lemma}
\label{lem: probabilistic inequality}
For $1\le j\le J$, let $X_{j}$ be independant random variables with $E(X_{j})=0$, $\abs{X_{j}} \le 2$, and $\Var(X_{j}) = \sigma_{j}^{2}$. Let $S=\sum_{j=1}^{J}X_{j}$, and $\sigma^{2} = \sum_{j=1}^{J}\sigma_{j}^{2} = \Var(S)$. Then
\[	
	P(S\ge v) \le \begin{dcases}
		\exp\biggl(-\frac{v^{2}}{4\sigma^{2}}\biggr)		&\text{if } v\le u_{0}\sigma^{2};\\
		\exp\biggl(-\frac{u_{0}v}{4}\biggr)			&\text{if } v>u_{0}\sigma^{2}.
	\end{dcases}
\]
Here $u_{0}$ is the positive solution of $\e^{u} = 1+u+u^{2}$, $u_{0} \approx 1.79328$.
\end{lemma}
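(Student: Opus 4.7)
The plan is to prove Lemma \ref{lem: probabilistic inequality} by the standard exponential moment (Chernoff--Bernstein) method. For any parameter $u>0$, Markov's inequality applied to $\mathrm{e}^{uS}$ together with independence of the $X_j$ yields
\[
P(S\ge v)\le \mathrm{e}^{-uv}\,E(\mathrm{e}^{uS})=\mathrm{e}^{-uv}\prod_{j=1}^{J}E(\mathrm{e}^{uX_j}).
\]
The task is then to bound $E(\mathrm{e}^{uX_j})$ sharply under the hypotheses $E(X_j)=0$ and $|X_j|\le 2$, and to choose $u$ optimally as a function of $v$.

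The key analytic ingredient is the pointwise inequality
\[
\mathrm{e}^{y}\le 1+y+y^{2}\qquad \text{for all }y\le u_{0},
\]
which is exactly why the constant $u_{0}$ defined by $\mathrm{e}^{u_{0}}=1+u_{0}+u_{0}^{2}$ enters the statement. This inequality is elementary: the function $g(y)=1+y+y^{2}-\mathrm{e}^{y}$ satisfies $g(0)=0=g(u_{0})$, has $g''(y)=2-\mathrm{e}^{y}$ which changes sign only once, and so $g'$ is unimodal; a quick sign analysis shows $g\ge 0$ on $(-\infty,u_{0}]$. Now if I restrict the free parameter to $0<u\le u_{0}/2$, then for every realization $uX_j\le 2u\le u_{0}$, so
\[
E(\mathrm{e}^{uX_j})\le E\bigl(1+uX_j+u^{2}X_j^{2}\bigr)=1+u^{2}\sigma_j^{2}\le \mathrm{e}^{u^{2}\sigma_j^{2}},
\]
having used $E(X_j)=0$. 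Multiplying over $j$ gives $\prod_j E(\mathrm{e}^{uX_j})\le \mathrm{e}^{u^{2}\sigma^{2}}$, so that for every $0<u\le u_{0}/2$,
\[
P(S\ge v)\le \exp\bigl(-uv+u^{2}\sigma^{2}\bigr).
\]

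It only remains to optimize in $u$ under the constraint $u\le u_{0}/2$. In the subcritical regime $v\le u_{0}\sigma^{2}$, the unconstrained minimizer $u=v/(2\sigma^{2})$ satisfies $u\le u_{0}/2$, which produces the Gaussian-type bound $\exp(-v^{2}/(4\sigma^{2}))$. In the supercritical regime $v>u_{0}\sigma^{2}$, the unconstrained optimum exceeds $u_{0}/2$, so I instead take $u=u_{0}/2$; then $-uv+u^{2}\sigma^{2}=-u_{0}v/2+u_{0}^{2}\sigma^{2}/4<-u_{0}v/2+u_{0}v/4=-u_{0}v/4$, giving the Poisson-type tail $\exp(-u_{0}v/4)$.

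There is no real obstacle: both the Chernoff step and the Bernstein-style choice of $u$ are standard, and the only marginally delicate point is pinning down the sharpest admissible range $y\le u_{0}$ for which $\mathrm{e}^{y}\le 1+y+y^{2}$, which is what dictates the precise breakpoint $v=u_{0}\sigma^{2}$ appearing in the statement. The only adaptation relative to \cite[Lemma 8]{DiamondMontgomeryVorhauer} is bookkeeping of constants caused by the bound $|X_j|\le 2$ rather than $|X_j|\le 1$, which explains the factor of $4$ in $v^{2}/(4\sigma^{2})$ and $u_{0}v/4$.
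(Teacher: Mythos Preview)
Your proof is correct and is precisely the argument the paper has in mind: the paper does not spell out a proof but simply notes that \cite[Lemma 8]{DiamondMontgomeryVorhauer} (which is the Chernoff--Bernstein bound for $|X_j|\le 1$) adapts readily, and your write-up carries out exactly that adaptation, with the constraint $u\le u_0/2$ replacing $u\le u_0$ to accommodate $|X_j|\le 2$.
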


\begin{proof}[Proof of Theorem \ref{th: discretization}]
Write $\dif F = \dif F_{c} + \dif F_{d}$, where $\dif F_{c}$ is a continuous measure, and $\dif F_{d}$ is purely discrete:
\[ \dif F_{d} = \sum_{n=1}^{\infty}\alpha_{n}\delta_{y_{n}}, \qquad y_{n}>1, \quad \alpha_{n} > 0, \]
where $\delta_{y}$ denotes the Dirac measure concentrated at $y$ and the sequence $\{y_n\}_{n=1}^{\infty}$ consists of distinct points.  We will discretize both measures separately\footnote{$\dif F_{d}$ is already a purely discrete measure, but does not necessarily arise as the prime counting measure of a discrete Beurling prime system, since $\{y_n\}_{n=1}^{\infty}$ may have accumulation points, and since, even if this sequence happens to be discrete, we do not assume that the $\alpha_{n}$ are integers.}. Let us start with the continuous part. 

Set $q_{0}=1$, $q_{j} = \min\{ x : F_{c}(x) = j\}$, for $j < \jm$, where $\jm=\infty$ if $F_{c}(\infty) = \infty$, and $\jm = \lceil F_{c}(\infty) \rceil$ if $F_{c}(\infty) < \infty$. Let $\{P_{j}\}_{1\le j<\jm}$ be a sequence of independent random variables, where $P_{j}$ is distributed on $(q_{j-1},q_{j}]$ according to the probability measure $\dif F_{c}|_{(q_{j-1},q_{j}]}$. Fix a number $t \in \R$ and set $X_{j,t} = \cos(t\log P_{j})$. For such a fixed $t$, the $X_{j,t}$ are independent random variables with expectation 
\[	E(X_{j,t}) = \int_{q_{j-1}}^{q_{j}}\cos(t\log u)\dif F_{c}(u) \]
and variance $\Var(X_{j,t}) \le 1$. Let $C$ be a constant such that 
\[	 F_{c}(x) \le C\frac{x}{\log (x+1)}. \]
Let $J<\jm$ and suppose that $q_{J}/\log(q_{J}+1) \ge \log(\,\abs{t}+1)$. Set $x=q_{J}$ and let $D=\max\{\sqrt{8C}, 8/u_{0}\}$, where $u_{0}$ is the number appearing in Lemma \ref{lem: probabilistic inequality}. Applying that lemma to the random variables $X_{j,t}-E(X_{j,t})$, with $v=D(\sqrt{x} + \sqrt{x\log(\,\abs{t}+1)/\log(x+1)})$, we get
\begin{multline*}	
	P\Biggl(\sum_{j=1}^{J}\cos(t\log P_{j}) - \int_{1}^{x}\cos(t\log u)\dif F_{c}(u) \ge D\biggl(\sqrt{x} + \sqrt{\frac{x\log(\,\abs{t}+1)}{\log(x+1)}}\biggr)\Biggr) \le \\
	\max\left\{\exp\biggl(-\frac{D^{2}}{4\sigma^{2}}\biggl(x+\frac{x\log(\,\abs{t}+1)}{\log(x+1)}\biggr)\biggr); \exp\biggl(-\frac{u_{0}}{4}D\biggl(\sqrt{x} + \sqrt{\frac{x\log(\,\abs{t}+1)}{\log(x+1)}}\biggr)\biggr)\right\}.
\end{multline*}
Here 
\[
	\sigma^{2} = \sum_{j=1}^{J}\Var(X_{j,t}) \le J =  F_{c}(x) \le C\frac{x}{\log (x+1)} \quad \text{and} \quad \sqrt{\frac{x}{\log(x+1)}} \ge \sqrt{\log(\,\abs{t}+1)}.
\]
Hence the above probability is bounded by $(x+1)^{-2}(\,\abs{t}+1)^{-2}$. Applying the same argument to the random variables $-X_{j,t}$, $\pm Y_{j,t} = \pm\sin(t\log P_{j})$, we get the same bounds for the corresponding probabilities.

Let 
\[	S(x,t) = \sum_{P_{j}\le x}P_{j}^{-\I t}, \quad S_{c}(x,t) = \int_{1}^{x}u^{-\I t}\dif F_{c}(u). \]
Then for $x=q_{J}$ with $x/\log(x+1) \ge \log(\,\abs{t}+1)$
\[
	P\Biggl(\, \abs{S(x,t) - S_{c}(x,t)} \ge \sqrt{2}D\biggl(\sqrt{x}+\sqrt{\frac{x\log(\, \abs{t}+1)}{\log(x+1)}}\biggr)\Biggr) \le \frac{4}{(x+1)^{2}(\,\abs{t}+1)^{2}}.
\]
Let 
$j_{t} = \min\{ j<\jm: q_{j}/\log(q_{j}+1) \ge \log(\,\abs{t}+1)\}$, where we set $j_{t}=\infty$ when the set is empty (which may happen if $\jm<\infty$ and $t$ is sufficiently large). Let $A_{k,j}$ denote the event $\abs{S(q_{j},k)-S_{c}(q_{j},k)} \ge \sqrt{2}D(\sqrt{q_{j}} + \sqrt{q_{j}\log(k+1)/\log(q_{j}+1)})$. Since\footnote{If $j_{k}=\infty$, the inner sum is zero by convention.}
\[
	\sum_{k=1}^{\infty}\sum_{j_{k}\le j < \jm}P(A_{kj}) \le \sum_{k=1}^{\infty}\sum_{j_{k}\le j< \jm}\frac{4}{(q_{j}+1)^{2}(\,\abs{k}+1)^{2}}\ll \sum_{k=1}^{\infty}\sum_{j=1}^{\infty} \frac{1}{j^{2}k^2} <\infty,
\]
the Borel-Cantelli lemma implies that the probability that infinitely many of the events $A_{k,j}$, $k\ge 1$, $j_{k}\le j < \jm$, occur, is zero. Fix now a point $\omega$ of the probability space which is only contained in finitely many $A_{k,j}$ with $k\ge 1$ and $j_{k}\le j < \jm$ and set $p_{j} = P_{j}(\omega)$. Then there exists a $k_{0}\ge 1$ such that for every $k\ge k_{0}$ and any $j$ with $j_{k} \le j<\jm$ (with now $S(x,k) = S(x,k)(\omega)$)
\begin{equation}
\label{eq: bound exponential sum for integers}
	S(q_{j},k) - S_{c}(q_{j},k) \ll \sqrt{q_{j}} + \sqrt{\frac{q_{j}\log(k+1)}{\log(q_{j}+1)}}.
\end{equation}
Also by construction of the random variables $P_{j}$, we have 
\[
	\abs{\pi_{\MP}(x) -  F_{c}(x)} \le 1, \quad \text{where } \pi_{\MP}(x) = \sum_{p_{j}\le x}1.
\]
Let now $1\le k < k_{0}$ and $j<\jm$ arbitrary. Integrating by parts,
\[
	\abs{S(q_{j},k) - S_{c}(q_{j},k)} = \abs{\int_{1}^{q_{j}^{+}}u^{-\I k}\dif\,(\pi_{\MP}(u) -  F_{c}(u))} \ll 1 + k_{0}\int_{1}^{q_{j}}\frac{\dif u}{u} \ll \log q_{j}.
\] 
If $k\ge k_{0}$ and $j<\min\{j_{k}, \jm\}$, then  
\[
	\abs{S(q_{j},k) - S_{c}(q_{j},k)} = \abs{\sum_{l=1}^{j}\int_{q_{l-1}}^{q_{l}}(p_{l}^{-\I k} - u^{-\I k})\dif  F_{c}(u)} \le 2 F_{c}(q_{j})\ll \sqrt{\frac{q_{j}}{\log(q_{j}+1)}}\sqrt{\log(k+1)}.
\]
We conclude that the bound \eqref{eq: bound exponential sum for integers} holds for any $k\ge0$ and any $j<\jm$.

Suppose now that $k\ge1$ and that for some $1\le j<\jm$, $x \in (q_{j-1},q_{j}]$. Then 
\begin{align*}
	S(x,k) = S(q_{j-1},k) + O(1)	&= S_{c}(q_{j-1},k) + O\biggl(\sqrt{q_{j-1}}+\sqrt{\frac{q_{j-1}\log (k+1)}{\log(q_{j-1}+1)}}\biggr) +O(1) \\
							&= S_{c}(x,k) + O\biggl(\sqrt{x}+\sqrt{\frac{x\log (k+1)}{\log(x+1)}}\biggr).
\end{align*}	 
If $\jm<\infty$ and $x>q_{\jm}$, then 
\begin{align*}
	S(x,k) 	&= S(q_{\jm},k) = S_{c}(q_{\jm},k) + O\biggl(\sqrt{q_{\jm}}+ \sqrt{\frac{q_{\jm}\log (k+1)}{\log(q_{\jm}+1)}}\biggr) \\
			&= S_{c}(x,k) + O\biggl(\sqrt{x} + \sqrt{\frac{x\log(k+1)}{\log(x+1)}}\biggr). 
\end{align*}
If $t\in[k,k+1]$ for some $k\ge0$, then by integration by parts,
\begin{align*}
S(x,t) 	&= \int_{1}^{x^{+}}u^{-\I(t-k)}\dif S(u,k) = S(x,k)x^{-\I(t-k)} + \I(t-k)\int_{1}^{x}S(u,k)u^{-\I(t-k)-1}\dif u \\
		&= S_{c}(x,k)x^{-\I(t-k)} + \I(t-k)\int_{1}^{x}S_{c}(u,k)u^{-\I(t-k)-1}\dif u + O\biggl(\sqrt{x}+ \sqrt{\frac{x\log(t+1)}{\log(x+1)}}\biggr) \\
		&= S_{c}(x,t) + O\biggl(\sqrt{x}+ \sqrt{\frac{x\log(t+1)}{\log(x+1)}}\biggr).
\end{align*}
Finally for negative $t$ we obtain the same bounds by taking the complex conjugate.

In order to discretize $\dif  F_{d}$, we can apply the same idea, but with a slight modification, since it may not be possible to partition $[1,\infty)$ into disjoint intervals each having total mass $1$. 
We proceed as follows. Set $q_{0}=1$, $q_{j} = \min\{x:  F_{d}(x) \ge j\}$, for $1\le j< \jm$, where again $\jm = \infty$ if $F_{d}(\infty)=\infty$ and $\jm=\lceil F_{d}(\infty)\rceil$ if $F_{d}(\infty) < \infty$. Note that it may occur that $q_{j} = q_{j+1} = \dotso = q_{j+k}$ for some $k\ge 1$; we have $q_{j}<q_{j+1} \iff \lfloor F_{d}(q_{j})\rfloor = j$. We will distribute the masses $\alpha_{n}$ over the intervals $[q_{j-1},q_{j}]$, $0\le j < \jm$ in such a way that each interval $[q_{j-1},q_{j}]$ has mass $1$. At points $q_{j}$, where $F_{d}$ ``spills over'' the next integer (or next $k+1$ integers), we divide the mass $\alpha$ of the point $q_{j}$ as $\alpha = \beta + k + \gamma$, where $\beta$ is ``given'' to the interval $[q_{j-1},q_{j}]$, and $\gamma$ is ``given'' to $[q_{j+k}, q_{j+k+1}]$. Making this precise, set $\gamma_{0} = 0$ and if $\gamma_{j-1}$ is defined with $j < \jm$, define numbers $\beta_{j}, \gamma_{j+k}\in [0,1]$ as  
\[
	\beta_{j} = 1 - \gamma_{j-1} - \sum_{q_{j-1}<y_n<q_{j}}\alpha_{n}, \quad \gamma_{j+k} = F_{d}(q_{j+k}) - \lfloor F_{d}(q_{j+k})\rfloor = F_{d}(q_{j+k}) - (j+k),
\]
where $k$ is the largest number (possibly zero) such that $q_{j} = q_{j+1}  = \dotso = q_{j+k}$. Note that the sum over $\alpha$ can be empty (hence zero), but may also consist of infinitely many terms.

Let $\{P_{j}\}_{1\le j<\jm}$ be a sequence of independent discrete random variables, where $P_{j}$ is distributed according to the probability law
\[
	P(P_{j} = y_{n}) = \begin{dcases}
					\gamma_{j-1}	&\text{if } y_n=q_{j-1}, \\
					\alpha_{n}		&\text{if } q_{j-1} < y_{n}< q_{j},\\
					\beta_{j}		&\text{if } y_n=q_{j};					
	\end{dcases}
\]
in the case that $q_{j-1}<q_{j}$, and $P_{j}$ is distributed according to the trivial law $P(P_{j} = q_{j})=1$ in the case that $q_{j-1} = q_{j}$. Note that when $q_{j-1}<q_{j}$ it can happen that $q_{j-1}$ or $q_{j}$ do not occur in the sequence $\{y_{n}\}_{n=1}^{\infty}$; however in these cases one sees that $\gamma_{j-1}=0$ and $\beta_{j}=0$ respectively. Again we consider for fixed $t$ the independent random variables $X_{j,t} = \cos(t\log P_{j})$. Let $J < \jm$ be such that $q_{J} < q_{J+1}$ or $J=\jm-1$. Suppose also that $q_{J}/\log(q_{J}+1) \ge \log(\,\abs{t}+1)$, and set $x = q_{J}$. We apply Lemma \ref{lem: probabilistic inequality} to the random variables $X_{j,t} - E(X_{j,t})$; however, in this case 
\[	
	\sum_{j=1}^{J}E(X_{j,t}) = \sum_{y_{n}\le x}\alpha_{n}\cos(t\log y_{n}) - \gamma_{J}\cos(t\log q_{J}).
\]
Nevertheless, we can absorb the last term in the error term by multiplying it by 2:
\[
	P\Biggl(\sum_{j=1}^{J}X_{j,t} - \sum_{y_{n}\le x}\alpha_{n}\cos(t\log y_{n}) \ge 2v\Biggr) \le P\Biggl(\sum_{j=1}^{J}\bigl(X_{j,t} - E(X_{j,t})\bigr) \ge v\Biggr)
\]
for $v\ge1$. Applying Lemma \ref{lem: probabilistic inequality} with $v = D'(\sqrt{x} + \sqrt{x\log(\,\abs{t}+1)/\log(x+1)})$, where $D'=\max\{\sqrt{8C'}, 8/u_{0}\}$, with $C'$ a constant such that $ F_{d}(x) \le C'x/\log(x+1)$, we obtain
\[
	P\Biggl(\sum_{j=1}^{J}X_{j,t} - \sum_{y_{n}\le x}\alpha_{n}\cos(t\log y_{n}) \ge 2D'\biggl(\sqrt{x} + \sqrt{\frac{x\log(\,\abs{t}+1)}{\log(x+1)}}\biggr)\Biggr) \le \frac{1}{(x+1)^{2}(\,\abs{t}+1)^{2}}.
\]
The proof can now be completed, mutatis mutandis, as in the continuous case.
\end{proof}

We now show that under the assumption that $F$ is absolutely continuous on any finite interval, we can ensure that the approximating discrete primes are supported on strictly increasing sequences which tend to $\infty$ sufficiently slowly, while still having the bound $\pi_{\MP}(x) - F(x) \ll 1$ instead of the weaker $\pi_{\MP}(x)-F(x)\ll \sqrt{x}$ delivered by the DMVZ-method. The following corollary is a direct improvement to \cite[Lemma 4]{Zhang2007}.

\begin{corollary}
Suppose $f$ is a non-negative $L^{1}_{loc}$-function supported on $[1,\infty)$ and satisfying the conditions \eqref{assumptions DMVZ-method}. Let 
\[	1 < v_{1} < \dotso <v_{k}< v_{k+1}<\dotso, \quad v_{k}\to \infty, \]
be a sequence such that $v_{k+1}-v_{k}\ll \log v_{k}$ and such that for any $t\ge0$
\[
	\sum_{v_{k}\ge h(t)}\frac{(v_{k}-v_{k-1})^{2}}{v_{k}\log v_{k}} \ll \frac{\log(t+1)}{t}, \quad \text{where} \quad h(t) = \log(t+1)\log\log(t+\e). 
\]
Then there exists a generalized prime system $\mathcal{P}=\{p_{j}\}_{j=1}^{\infty}$ supported\footnote{Strictly speaking, $\{p_{j}\}_{j=1}^{\infty}$ needs not be a subsequence of $\{v_{k}\}_{k=1}^{\infty}$, since some primes $p_{j}$ may be repeated.} on the sequence $\{v_{k}\}_{k=1}^{\infty}$ such that for any $x\ge1$ and any $t$
\begin{equation}
\label{eq: prime system bound}
	\abs[3]{\pi_{\MP}(x) - \int_{1}^{x}f(u)\dif u} \ll 1 \quad \text{and} \quad \abs[3]{\sum_{p_{j}\le x}p_{j}^{-\I t} - \int_{1}^{x}u^{-\I t}f(u)\dif u} \ll \sqrt{x} + \sqrt{\frac{x\log(\,\abs{t}+1)}{\log(x+1)}}.
\end{equation}
\end{corollary}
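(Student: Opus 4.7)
The plan is to reduce the corollary to Theorem~\ref{th: discretization} by replacing the absolutely continuous measure $\dif F=f\,\dif u$ by a purely discrete measure $\dif \tilde F$ supported on $\{v_k\}_{k\geq 1}$, applying Theorem~\ref{th: discretization} to $\tilde F$, and then controlling the approximation error $\dif F - \dif \tilde F$ using the two spacing hypotheses.

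Set $v_0=1$ and $\alpha_k = F(v_k)-F(v_{k-1}) = \int_{v_{k-1}}^{v_k} f(u)\,\dif u$, and let $\dif \tilde F = \sum_{k\ge 1}\alpha_k\,\delta_{v_k}$. The hypotheses $f\ll 1/\log u$ and $v_k-v_{k-1}\ll \log v_k$ together with $\log v_k \asymp \log v_{k-1}$ for $k$ large give $\alpha_k\ll 1$. Moreover $\tilde F$ is right-continuous, non-decreasing, tends to infinity, vanishes at $1$, and satisfies the Chebyshev bound $\tilde F(x) \le F(x) \ll x/\log x$. Applying Theorem~\ref{th: discretization} to $\tilde F$ produces a generalized prime system $\MP=\{p_j\}$ supported on $\{v_k\}$ with $|\pi_\MP(x)-\tilde F(x)|\le 2$ and the oscillating sum bound with $\tilde F$ in place of $F$. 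Since $F(x)-\tilde F(x) = F(x)-F(v_K)\le C(x-v_K)/\log v_K \ll 1$ (where $v_K$ is the largest $v_k\le x$), the triangle inequality yields the first bound in \eqref{eq: prime system bound}.

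For the oscillating sum it suffices, by the triangle inequality and Theorem~\ref{th: discretization}, to estimate
\[
	\int_1^x u^{-\I t}\dif \tilde F(u) - \int_1^x u^{-\I t}\dif F(u) = \sum_{v_k\le x}\int_{v_{k-1}}^{v_k}\bigl(v_k^{-\I t}-u^{-\I t}\bigr)f(u)\,\dif u + O(1),
\]
where the $O(1)$ accounts for the leftover interval $(v_K,x]$, handled as above. Split the $k$-sum at the threshold $v_k = h(\,\abs{t}+1)$. For $v_k\le h(\,\abs{t}+1)$, use the trivial bound $|v_k^{-\I t}-u^{-\I t}|\le 2$ to bound the contribution by $2F(h(\,\abs{t}+1))\ll h(\,\abs{t}+1)/\log h(\,\abs{t}+1)$, which equals $O(\log(\,\abs{t}+1))$ since $\log h(t)\sim \log\log(t+1)$. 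For $v_k>h(\,\abs{t}+1)$, use the Lipschitz estimate $|v_k^{-\I t}-u^{-\I t}|\le \abs{t}(v_k-v_{k-1})/v_{k-1}$ together with $f\ll 1/\log u$ to majorize the contribution by
\[
	\abs{t}\sum_{v_k\ge h(\,\abs{t}+1)}\frac{(v_k-v_{k-1})^2}{v_k\log v_k}\ll \abs{t}\cdot\frac{\log(\,\abs{t}+1)}{\abs{t}} = \log(\,\abs{t}+1)
\]
by the summability hypothesis. Thus the total discrepancy between the two integrals is $O(\log(\,\abs{t}+1))$, which is absorbed by the target bound $\sqrt{x}+\sqrt{x\log(\,\abs{t}+1)/\log(x+1)}$ provided $\log(\,\abs{t}+1)\log(x+1)\le x$; in the complementary range both $\int_1^x u^{-\I t}\dif \tilde F$ and $\int_1^x u^{-\I t}\dif F$ are individually $\ll x/\log(x+1) \le \sqrt{x\log(\,\abs{t}+1)/\log(x+1)}$, so the difference is again absorbed.

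The main obstacle is the split in the last step: the threshold $h$ is custom-tailored so that the trivial bound $2$ on the small-$v_k$ piece and the Lipschitz bound on the large-$v_k$ piece both come out as $O(\log(\,\abs{t}+1))$; this is precisely why the author has introduced $h(t) = \log(t+1)\log\log(t+\e)$ and the particular summability condition in the hypothesis. Everything else is a straightforward application of Theorem~\ref{th: discretization} followed by Chebyshev-type estimates.
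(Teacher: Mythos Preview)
Your proof is correct and follows essentially the same route as the paper's: you construct the same intermediate discrete measure $\dif\tilde F=\sum_k\alpha_k\delta_{v_k}$ (the paper calls it $\dif G$), apply Theorem~\ref{th: discretization} to it, and then bound the discrepancy $\int u^{-\I t}\dif(\tilde F-F)$ by splitting at $h(|t|)$ and using the trivial bound on the small-$v_k$ piece, the Lipschitz bound plus the summability hypothesis on the large-$v_k$ piece, and the Chebyshev bound in the range $x/\log(x+1)<\log(|t|+1)$. The only point you leave implicit that the paper spells out is why the resulting primes lie in $\{v_k\}$: this is because the discrete random variables in the proof of Theorem~\ref{th: discretization} are supported on the atoms of the input measure.
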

Some examples of admissible sequences are $v_{k} = (\log(k+k_{0}))^{a}(\log \log (k+k_{0}))^{b}$ with  $0<a<1$ and $b\in\mathbb{R}$ and
$v_{k} = \log(k+k_{0})(\log\log(k+k_{0}))^{b}$ with $b\le1$.
\begin{proof} Write $\dif F(u)=f(u)\dif u$. 
The idea of the proof is to construct an `intermediate' measure $\dif G$ which is close to $\dif F$ and supported on the sequence $\{v_{k}\}_{k=1}^{\infty}$. The primes $p_{j}$ will then be obtained discretizing $\dif G$  by using Theorem \ref{th: discretization}.

We set $v_{0} = 1$ and define the measure $\dif G$ as
\[
	\dif G = \sum_{k=1}^{\infty}\alpha_{k}\delta_{v_{k}}, \quad \text{where} \quad \alpha_{k} = \int_{v_{k-1}}^{v_{k}}\dif F.
\] 
By the first requirement on the sequence $\{v_{k}\}_{k=1}^{\infty}$ and the bound $\dif F(u) \ll \dif u/\log u$,  we have $ G(x) - F(x) \ll 1$. Let now $t$ be arbitrary, and let $x$ be such that $x/\log(x+1) < \log(\,\abs{t}+1)$. Then trivially
\[
	\abs[3]{\sum_{v_{k}\le x}\alpha_{k}v_{k}^{-\I t} - \int_{1}^{x}u^{-\I t}\dif F(u)} \leq 2 F(x) \ll \frac{x}{\log(x+1)} < \sqrt{\frac{x\log(\,\abs{t}+1)}{\log(x+1)}}.
\]
If on the other hand $x/\log(x+1) \ge \log(\,\abs{t}+1)$, we proceed as follows:
\[
	\abs[3]{\sum_{v_{k}\le x}\alpha_{k}v_{k}^{-\I t} - \int_{1}^{x}u^{-\I t}\dif F(u)} \ll 1 + \int_{1}^{v_{L}}\dif F(u) + \sum_{k=L+1}^{K}\int_{v_{k-1}}^{v_{k}}\abs{v_{k}^{-\I t}-u^{-\I t}}\dif F(u).
\]
Here $K$ is such that $v_{K}\le x < v_{K+1}$, and $L$ is the largest integer $\le K$ such that $v_{L} < h(\,\abs{t}) = \log(\,\abs{t}+1)\log\log(\,\abs{t}+\e)$. Bounding $\abs[1]{v_{k}^{-\I t}-u^{-\I t}}$ by $\abs{t}(v_{k}-v_{k-1})/v_{k}$ (note that $v_{k}/v_{k-1} \ll 1$) and using the bound $\dif F(u) \ll \dif u/\log u$, we get
\begin{align*}
	\abs[3]{\sum_{v_{k}\le x}\alpha_{k}v_{k}^{-\I t} - \int_{1}^{x}u^{-\I t}\dif F(u)}	
		&\ll \frac{v_{L}}{\log(v_{L}+1)} + \abs{t}\sum_{v_{k}\ge h(\,\abs{t})}\frac{(v_{k}-v_{k-1})^{2}}{v_{k}\log v_{k}}\\
		&\ll \log(\,\abs{t}+1) \le \sqrt{\frac{x\log(\,\abs{t}+1)}{\log(x+1)}},
\end{align*}	
where we used the second property of the sequence $\{v_{k}\}_{k=1}^{\infty}$ and $\log(\,\abs{t}+1) \le x/\log(x+1)$. Applying Theorem \ref{th: discretization} to $G$ yields a sequence $\{p_{j}\}_{j=1}^{\infty}$ of primes satisfying \eqref{eq: prime system bound} (by comparing with $\dif G$ via the triangle inequality). By construction of the discrete random variables in the proof of Theorem \ref{th: discretization}, the primes $p_{j}$ are contained in the support of $\dif G$, that is the sequence $\{v_{k}\}_{k=1}^{\infty}$.
\end{proof}

\begin{remark}
It is possible to generalize Theorem \ref{th: discretization} to functions $F$ with different growth. Indeed, suppose that $F(x) \ll A(x)$, where $A$ is non-decreasing, has tempered growth, namely, $ A(x)\ll  x^{n}$ for some $n$, and satisfies 
\begin{equation}
\label{eq: integral condition A}
\int_{1}^{x}\frac{\sqrt{A(u)}}{u}\dif u \ll \sqrt{A(x)}
\end{equation}
(which implies $x^{\delta} \ll A(x)$ for some $\delta>0$ depending on the implicit constant above). Then the conclusion of theorem holds if we replace the bound \eqref{eq: bound exponential sum} by 
\[
	\abs[3]{ \sum_{p_{j}\le x}p_{j}^{-\I t} - \int_{1}^{x}u^{-\I t}\dif F(u) } \ll \sqrt{A(x)}\bigl(\sqrt{\log(x+1)} + \sqrt{\log(\,\abs{t}+1)}\bigr). 
\] 
We remark that \eqref{eq: integral condition A} is satisfied whenever $A$ is of positive increase (see \cite[Theorem 2.6.1(b) and Definition of PI in p. 71]{BGTbook}). 
\end{remark}

%%%%%%%%%%%%%%%%%%%%%%%%%%%%%%%%%%%%%%%%%%%%%%%%%%%%%%%%%%%%%%%%%%%%%%%%%%%%%
% FIRST APPLICATION
%%%%%%%%%%%%%%%%%%%%%%%%%%%%%%%%%%%%%%%%%%%%%%%%%%%%%%%%%%%%%%%%%%%%%%%%%%%%%
\section{Balazard's question for general Dirichlet series and the existence of well-behaved Beurling numbers of type $[0,1/2,1/2]$}
\label{sec: First application}
In this section we simultaneously give a proof of Proposition \ref{prop: Balazard's question} and address an open question from \cite{NeamahHilberdink}. In preparation, we need to introduce some concepts.

Let  $\mathcal{P}=\{p_{j}\}_{j=0}^{\infty}$ be a Beurling generalized prime system. 
The associated (multi)set of Beurling generalized integers $\mathcal{N}$ is the semi-group generated by 1 and the numbers $p_{j}$, which we arrange in a non-decreasing fashion (taking multiplicities into account): $1=n_{0} < n_{1}\le n_{2} \le \dotso \le n_{k} \le \dotso$. Besides $\pi_{\MP}$ and $\zeta_{\MP}$, we can associate to the number system familiar number theoretic functions \cite{DiamondZhangbook}. The counting function of the generalized integers is denoted as  $N_{\MP}(x) = \sum_{n_{k} \le x}1.$ As in classical number theory, one defines the Riemann prime counting function as 
\[	\Pi_{\MP}(x) = \sum_{p_{j}^{\nu}\le x} \frac{1}{\nu} = \sum_{\nu=1}^{\infty}\frac{\pi_{\MP}(x^{1/\nu})}{\nu}. \]
The functions $N_{\MP}$ and $\Pi_{\MP}$ are then linked via the zeta function identity
\begin{equation}
\label{eq: zeta identity}
	\zeta_{\mathcal{P}}(s) = \int_{1^{-}}^{\infty}x^{-s}\dif N_{\MP}(x) = \exp\biggl(\int_{1^{-}}^{\infty}x^{-s}\dif\Pi_{\MP}(x)\biggr).
\end{equation}

The M\"{o}bius function of the generalized number system is determined by its sum function $M(x)=\sum_{n_{k}\leq x} \mu(n_{k})$,
where $\dif M$ is defined as the (multiplicative) convolution inverse of $\dif N$; equivalently, 
\begin{equation}
\label{eq: Moebius general Dirichlet series}
\sum_{k=1}^{\infty}\frac{\mu(n_k)}{n_k^{s}}=\frac{1}{\zeta_{\MP}(s)}.
\end{equation}

Let us assume that $\zeta_{\MP}$ has abscissa of convergence 1. Following\footnote{We count the primes using Riemann's counting function $\Pi$ instead of Chebyshev's $\psi$. An error term for $\Pi$ can be transported to one for $\psi$ at just the cost of an additional $\log$-factor.} Hilberdink and Neamah (cf. \cite{NeamahHilberdink}), we define the three numbers $\alpha, \beta, \gamma$ as the unique exponents (necessarily elements of $[0,1]$) for which the relations
\begin{subequations}
\begin{align*}
	\Pi(x) 	&= \Li(x) + O(x^{\alpha+\eps}),\\
	N(x)		&= \rho x + O(x^{\beta+\eps})\\
	M(x)		&= O(x^{\gamma+\eps}),
\end{align*}
\end{subequations}
hold for some $\rho>0$ and for any $\eps>0$, but no $\eps<0$. Here we choose to normalize the logarithmic integral as 
\[
 \Li(x) \coloneqq \int_{1}^{x}\frac{1-u^{-1}}{\log u} \dif u.
\]
We then call such a Beurling generalized number system an $[\alpha,\beta,\gamma]$-system. The main result\footnote{For this result it is imperative to consider \emph{discrete} number systems, since it is obviously false for continuous ones: consider for example $\Pi_{0}=\Li$, for which \cite{DiamondZhangbook} $N_0(x)=x$ and $M_{0}(x) = 1-\log x$, for an easy counterexample.} of \cite{NeamahHilberdink} (see also \cite{Hilberdink2005}) tells us that $\Theta=\max\{\alpha,\beta,\gamma\}$ is at least 1/2 and that at least two of these numbers must be equal to $\Theta$. Hilberdink and Lapidus \cite{HilberdinkLapidus2006} call a Beurling number system \emph{well-behaved}\footnote{To ensure this it suffices to know that just two of the numbers are $<1$, as we can deduce from \cite[Theorem 2.3]{HilberdinkLapidus2006} and (the proof of) \cite[Theorem 2.1]{NeamahHilberdink}.} if $\Theta<1$. 

The best possible types of well-behaved generalized numbers are then of type $[0,1/2,1/2]$, $[1/2,0,1/2]$, and $[1/2,1/2,0]$. If the RH holds, then the rational integers are a $[1/2,0,1/2]$-system, so that we have a candidate example of this instance. It is conjectured in \cite{NeamahHilberdink} that there are no $[1/2,1/2,0]$-systems. The following open question is also posed in \cite[Section 4]{NeamahHilberdink}: Does there exist a $[0,\beta,\beta]$ system with $\beta<1$? The following theorem answers this question positively; we actually establish the existence of $[0,1/2,1/2]$-systems.

\begin{theorem}
\label{th: [0,1/2]-system}
There is a discrete Beurling generalized prime system $\mathcal{P}$ 
such that
\begin{equation}
\label{eq: behavior Pi}
	\Pi_{\MP}(x) = \Li(x) + O(\log\log x),
\end{equation}
\begin{equation}
\label{eq: behavior N and M}
	N_{\MP}(x) = x + O\bigl(x^{1/2}\exp(c(\log x)^{2/3})\bigr), \quad M_{\MP}(x) = O\bigl(x^{1/2}\exp(c(\log x)^{2/3})\bigr),
\end{equation}
for some $c>0$, and
\begin{equation}
\label{eq: Omega-behavior N and M}
N_{\MP}(x) = x + \Omega_{\varepsilon}\bigl(x^{1/2-\varepsilon}\bigr), \quad M_{\MP}(x) = \Omega_{\varepsilon} \bigl(x^{1/2-\varepsilon}\bigr),
\end{equation}
for any $\varepsilon>0$. 

\end{theorem}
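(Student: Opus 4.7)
The overall strategy is to combine the tight discretization given by Theorem~\ref{th: discretization} with a standard zeta-function / Perron analysis for the upper bounds, and with the obstruction of Hilberdink and Neamah for the $\Omega$-bounds.

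First, I would take the template $F$ to be an appropriate monotonization of the Riemann $R$-function
\[
R(x) = \sum_{n=1}^{\infty}\frac{\mu(n)}{n}\operatorname{Li}(x^{1/n}),
\]
which with the paper's normalization of $\operatorname{Li}$ converges for $x>1$, is asymptotic to $x/\log x$, is eventually non-decreasing, and satisfies the M\"obius identity $\sum_{\nu\ge1}R(x^{1/\nu})/\nu = \operatorname{Li}(x)$. After a bounded adjustment near $x=1$ to meet the hypotheses of Theorem~\ref{th: discretization} (continuity, right-continuity, $F(1)=0$, monotonicity, Chebyshev bound $\ll x/\log x$), that theorem yields a prime system $\MP$ with $|\pi_{\MP}(x)-F(x)|\le 1$ and the exponential sum bound \eqref{eq: bound exponential sum}. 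From
\[
\Pi_{\MP}(x) - \operatorname{Li}(x) = \sum_{\nu\ge1}\frac{\pi_{\MP}(x^{1/\nu}) - F(x^{1/\nu})}{\nu} + \biggl(\sum_{\nu\ge1}\frac{F(x^{1/\nu})}{\nu} - \operatorname{Li}(x)\biggr),
\]
the first sum is $O(\log\log x)$ (using $|\pi_{\MP}-F|\le1$ and $\sum_{\nu\le \log x/\log p_{1}}\nu^{-1}\ll\log\log x$) while the second is $O(1)$ by the M\"obius identity and the bounded adjustment. This proves \eqref{eq: behavior Pi}, hence $\alpha=0$ in the sense of \cite{NeamahHilberdink}.

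Next, the zeta-function analysis. A direct computation (differentiating $\int_1^\infty x^{-s}\dif\operatorname{Li}(x)$ in $s$) gives the model $\zeta_{0}(s) := \exp\int_{1}^{\infty}x^{-s}\dif\operatorname{Li}(x) = s/(s-1)$. Since $\Pi_{\MP}-\operatorname{Li}=O(\log\log x)$, the quotient
\[
H(s) := \frac{\zeta_{\MP}(s)}{\zeta_{0}(s)} = \exp\int_{1}^{\infty}x^{-s}\dif(\Pi_{\MP}-\operatorname{Li})(x)
\]
is analytic and non-vanishing in $\Re s>0$; in particular $\zeta_{\MP}$ has a unique pole at $s=1$ (simple, residue $1$) and no zero in $\Re s>0$. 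A uniform growth bound on $H$ in the critical strip comes from feeding \eqref{eq: bound exponential sum} into partial summation and optimizing the truncation cut-off in $x$ as a function of $|t|$, yielding an estimate of the form $\log|H(\sigma+\I t)| \ll (\log|t|)^{2/3}$ in a thin strip containing $\Re s=1/2$. Shifting the Perron contour for $N_{\MP}$ and, via \eqref{eq: Moebius general Dirichlet series}, for $M_{\MP}$ along a Vinogradov--Korobov-type curve $\sigma=1/2-c/((\log|t|)^{2/3}(\log\log|t|)^{1/3})$, with main term $x$ coming from the residue at $s=1$ for $N_{\MP}$ and $0$ for $M_{\MP}$, then yields both upper bounds in \eqref{eq: behavior N and M}.

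Finally, the $\Omega$-bounds \eqref{eq: Omega-behavior N and M} come for free from the Hilberdink--Neamah obstruction recalled just before the theorem: the exponent $\Theta=\max(\alpha,\beta,\gamma)$ is at least $1/2$, and must be attained by at least two of the three. Since we have just proved $\alpha=0$ and $\beta,\gamma\le 1/2$, we are forced into $\beta=\gamma=1/2$, which by the definitions of $\beta$ and $\gamma$ is precisely \eqref{eq: Omega-behavior N and M}. The main obstacle of the proof is the middle paragraph: extracting the Vinogradov--Korobov growth rate $\exp(c(\log|t|)^{2/3})$ for $\zeta_{\MP}$ on the critical line from the bare $\sqrt{x}$ error of \eqref{eq: bound exponential sum}, and propagating it through the Perron contour shift uniformly down to $\sigma=1/2-$; the rest of the argument is essentially formal once this analytic input is in place.
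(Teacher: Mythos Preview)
Your overall architecture matches the paper's: discretize the Riemann function $R=\li$ (the paper's notation) via Theorem~\ref{th: discretization}, deduce $\Pi_{\MP}-\Li=O(\log\log x)$, bound $\log\zeta_{\MP}$ using \eqref{eq: bound exponential sum}, run Perron inversion, and finish the $\Omega$-bounds with the Hilberdink--Neamah obstruction. The first and last paragraphs are essentially correct and agree with the paper (the ``monotonization'' is unnecessary, since $\li(x)=\sum_{n\ge1}(\log x)^{n}/(n!\,n\,\zeta(n+1))$ is manifestly increasing; and to get density exactly $1$ you must, as the paper does, alter finitely many primes so that $Z(1)=0$).

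The middle paragraph, however, is not right as written. Feeding \eqref{eq: bound exponential sum} into partial summation does \emph{not} give a bound of the shape $\log|H(\sigma+\I t)|\ll(\log|t|)^{2/3}$ on or to the left of $\Re s=1/2$: the partial-summation integral $\int_{1}^{\infty}x^{-\sigma-1}\bigl(S(x,t)-S_{c}(x,t)\bigr)\dif x$ diverges for $\sigma\le 1/2$, since the integrand is only $\ll x^{-\sigma-1/2}\sqrt{\log|t|/\log x}$. There is no ``truncation cut-off in $x$ as a function of $|t|$'' to optimize here; the $x$ is integrated out. What \eqref{eq: bound exponential sum} actually yields is the paper's bound
\[
|Z(s)|\;=\;\Bigl|\log\zeta_{\MP}(s)-\log\tfrac{s}{s-1}\Bigr|\;\ll\;\frac{1}{\sigma-1/2}+\sqrt{\frac{\log(|t|+1)}{\sigma-1/2}},\qquad \sigma>1/2,
\]
which blows up as $\sigma\to 1/2^{+}$. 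Consequently one cannot shift the Perron contour to a curve lying to the \emph{left} of $1/2$; the paper instead shifts to the vertical line $\sigma=\sigma_{x}\coloneqq 1/2+(\log x)^{-1/3}$, so that $x^{\sigma_{x}}=x^{1/2}\exp\bigl((\log x)^{2/3}\bigr)$ and, on that line, $|Z(\sigma_{x}+\I t)|\ll(\log x)^{1/3}+(\log x)^{1/6}\sqrt{\log(|t|+1)}\le 2C(\log x)^{2/3}$ for $|t|\le x$. The optimization is in the distance $\sigma_{x}-1/2$ as a function of $x$, not in a cut-off as a function of $|t|$, and the contour stays strictly to the right of $1/2$. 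The Vinogradov--Korobov analogy is misleading here. For $M_{\MP}$, rather than applying Perron directly to $1/\zeta_{\MP}$ (where monotonicity fails), the paper applies the same argument to the non-decreasing function $N_{\MP}+M_{\MP}$, whose Mellin transform is $\zeta_{\MP}+1/\zeta_{\MP}$, and subtracts.
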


It follows at once that \eqref{eq: Moebius general Dirichlet series} for the Beurling number system from Theorem \ref{th: [0,1/2]-system} furnishes an example of a general Dirichlet series having abscissa of convergence $\sigma_{c}=1/2$ and with a unique zero in its half plane of convergence, namely, at $s=1$, which proves Proposition \ref{prop: Balazard's question}.

\begin{proof}
We apply Theorem \ref{th: discretization} to $F(x) = \li(x)$, where $\li$ is such that $\Li(x) = \sum_{\nu\ge1}\li(x^{1/\nu})/\nu$. A small computation shows that
\[	
	\li(x) = \sum_{\nu=1}^{\infty}\frac{\mu(\nu)}{\nu} \Li (x^{1/\nu}) =\sum_{n=1}^{\infty}\frac{(\log x)^{n}}{n! n \zeta(n+1)}. 
\]
Here $\zeta$ and $\mu(\nu)$ are the classical Riemann-zeta and M\"{o}bius functions, respectively. The Chebyshev bound holds since 
\[	
	\li(x) \le  \sum_{n=1}^{\infty}\frac{(\log x)^{n}}{n! n} = \Li(x) \le 2\frac{x}{\log x}, \quad \text{if } x \gg 1. 
\]
We thus find generalized primes $\mathcal{P} :\: 1<p_{1}<p_{2}<\dotso$ with $\pi_{\MP}(x) = \sum_{p_{j}\le x}1 = \li(x) + O(1)$ and satisfying \eqref{eq: bound exponential sum}. To simplify the notation, we drop the subscript $\MP$ from all counting functions associated to this generalized prime system, but we make an exception with $\zeta_{\MP}(s)$ for which the subscript is kept in order to distinguish it from the Riemann zeta function $\zeta(s)$. The Riemann prime counting function $\Pi$ of $\MP$ satisfies
\begin{align*}
\Pi(x) 	&= \sum_{\nu=1}^{\left\lfloor\frac{\log x}{\log p_{1}}\right\rfloor}\frac{1}{\nu}\pi(x^{1/\nu}) = \sum_{\nu=1}^{\left\lfloor\frac{\log x}{\log p_{1}}\right\rfloor}\frac{1}{\nu}\bigl( \li(x^{1/\nu}) + O(1)\bigr) \\
		&= \sum_{\nu=1}^{\infty}\sum_{n=1}^{\infty}\frac{(\log x)^{n}}{n!n\zeta(n+1)}\frac{1}{\nu^{n+1}} - \sum_{\nu > \left\lfloor\frac{\log x}{\log p_{1}}\right\rfloor}\sum_{n=1}^{\infty}\frac{(\log x)^{n}}{n!n\zeta(n+1)}\frac{1}{\nu^{n+1}} + O(\log\log x) \\
		&= \Li(x) + O(\log\log x).
\end{align*}
Also 
\[	
	\Li(x) = \li(x) + O\biggl(\frac{\sqrt{x}}{\log x}\biggr), \quad \text{so } \Pi(x) = \pi(x) + O\biggl(\frac{\sqrt{x}}{\log x}\biggr). 
\]
The bound $\Pi(x) - \Li(x) \ll \log\log x$ implies that $Z(s) \coloneqq \log\zeta_{\mathcal{P}}(s) - \log(s/(s-1))$ has analytic continuation to the half plane $\sigma>0$. By changing a finite number of primes, we may assume that $Z(1)=0$, so that the corresponding integers have density $1$. 
Using the bound \eqref{eq: bound exponential sum} we can deduce good bounds for $Z$ in the half plane $\sigma>1/2$, which allows one to deduce the asymptotic relations \eqref{eq: behavior N and M} via Perron inversion. The proof is essentially the same as that of Zhang's theorem \cite[Theorem 1]{Zhang2007}, but we will repeat it for convenience of the reader.

We have that 
\[
	Z(s) = \int_{1}^{\infty}x^{-s}\dif\, (\pi(x)-\li(x)) + \int_{1}^{\infty}x^{-s}\dif\,(\Pi(x) - \pi(x)) - \int_{1}^{\infty}x^{-s}\dif\,(\Li(x) - \li(x)).
\]
The last two integrals have analytic continuation to $\sigma>1/2$ and are $O((\sigma-1/2)^{-1})$ for $\sigma>1/2$. The first integral has analytic continuation to $\sigma>1/2$ as well, and using \eqref{eq: bound exponential sum} it can be bounded by
\begin{align*}
		&\int_{1}^{\infty}x^{-\sigma}\dif\,(S(x,t) - S_{c}(x,t)) = \sigma\int_{1}^{\infty}x^{-\sigma-1}(S(x,t) - S_{c}(x,t))\dif x \\
	\ll 	&\int_{1}^{\infty}x^{-\sigma-1/2}\biggl(1+\sqrt{\frac{\log(\,\abs{t}+1)}{\log x}}\biggr)\dif x \ll \frac{1}{\sigma-1/2} + \sqrt{\frac{\log(\,\abs{t}+1)}{\sigma-1/2}},
\end{align*}
where have used the same notation as in the proof of Theorem \ref{th: discretization} for the exponential sums and integrals. Hence for $\sigma>1/2$ and some constant $C>0$
\begin{equation}
\label{eq: bound logzeta sigma>1/2}
	\abs{Z(s)} \le C\biggl(\frac{1}{\sigma-1/2} + \sqrt{\frac{\log(\,\abs{t}+1)}{\sigma-1/2}}\biggr).
\end{equation}
Let now $x$ be large but fixed. We want to derive an estimate for $N(x)$ by Perron inversion. Actually we will apply the Perron formula to $N_{1}(x)\coloneqq \int_{1}^{x}N(u)\dif u$, because then the Perron integral is absolutely convergent. Indeed, we have for any $\kappa>1$ that
\[
	N_{1}(x) = \frac{1}{2\pi\I}\int_{\kappa-\I\infty}^{\kappa+\I\infty}\frac{x^{s+1}\zeta_{\MP}(s)}{s(s+1)}\dif s = \frac{1}{2\pi\I}\int_{\kappa-\I\infty}^{\kappa+\I\infty}\frac{x^{s+1}\e^{Z(s)}}{(s-1)(s+1)}\dif s.
\]
One then uses the fact that $N$ is non-decreasing, so that 
\[	N_{1}(x) - N_{1}(x-1) \le N(x) \le N_{1}(x+1) - N_{1}(x).\] Set $\sigma_{x}=1/2 + (\log x)^{-1/3}$. Then uniformly for 
$\sigma\ge\sigma_{x}$,
 \[
 	\abs{Z(s)} \le C\bigl((\log x)^{1/3}+(\log x)^{1/6}\sqrt{\log(\,\abs{t}+1)}\bigr).
\]
We shift the contour to the line $\sigma=\sigma_{x}$. By the residue theorem (recall that $Z(1)=0$):
\begin{align*}
	N(x)	&\le \frac{(x+1)^{2}}{2} - \frac{x^{2}}{2} + \frac{1}{2\pi\I}\int_{\sigma_{x}-\I\infty}^{\sigma_{x}+\I\infty}\frac{((x+1)^{s+1}-x^{s+1})\e^{Z(s)}}{(s-1)(s+1)}\dif s \\
		&= x + \frac{1}{2} + \frac{1}{2\pi\I}\int_{\sigma_{x}-\I\infty}^{\sigma_{x}+\I\infty}\frac{((x+1)^{s+1}-x^{s+1})\e^{Z(s)}}{(s-1)(s+1)}\dif s.
\end{align*}
We split the range of integration into two pieces: $\abs{t}\le x$ and $\abs{t}>x$. In the first piece we bound $(x+1)^{s+1} - x^{s+1}$ by $\abs{s+1}x^{\sigma_{x}}$, whereas in the second one by $x^{\sigma_{x}+1}$. This gives
\begin{multline*}
	N(x) \le x + \frac{1}{2} + O\biggl\{x^{1/2}\exp\bigl((\log x)^{2/3}\bigr)\int_{0}^{x}\frac{\exp\bigl(2C(\log x)^{2/3}\bigr)}{t+1}\dif t \\
			+ x^{3/2}\exp\bigl((\log x)^{2/3}\bigr)\int_{x}^{\infty}\exp\bigl(2C(\log x)^{1/6}\sqrt{\log(t+1)}\bigr)\frac{\dif t}{t^{2}}\biggr\}
\end{multline*}
The first integral is bounded by $\exp\bigl(2C(\log x)^{2/3}\bigr) \log x$ and the second one by the term $x^{-1}\exp\bigl(2C(\log x)^{2/3}\bigr)$. A similar reasoning applies for a lower bound for $N$, and one sees that the asymptotic relation in \eqref{eq: behavior N and M} for $N$ holds with any $c>2C+1$.

To obtain the asymptotic behavior of $M$, we apply the same reasoning to $N(x) + M(x)$, which is also non-decreasing, and which has Mellin transform
\[
	\zeta_{\MP}(s) + \frac{1}{\zeta_{\MP}(s)} = \frac{s}{s-1}\e^{Z(s)} + \frac{s-1}{s}\e^{-Z(s)},
\]
to show that $N(x) + M(x) = x + O\bigl(x^{1/2}\exp(c(\log x)^{2/3}\bigr)$. The bound for $M$ in \eqref{eq: behavior N and M} then follows by combining this asymptotic estimate with that we have already obtained for $N$.

Finally, the oscillation estimates \eqref{eq: Omega-behavior N and M} follow at once from \eqref{eq: behavior N and M} and the result of Hilberdink and Neamah from \cite{NeamahHilberdink} quoted above.

\end{proof}

\begin{remark}
We stress that the strong bound $\Pi_{\MP}(x) - \Li(x) \ll \log\log x$ is crucial in the above arguments to generate the oscillation estimates \eqref{eq: Omega-behavior N and M}. In particular, if only the weaker bound $\Pi_{\MP}(x) - \Li(x) \ll \sqrt{x}$  had been known (like in Zhang's generalized number system from  \cite[Theorem 1]{Zhang2007}, whose construction is based upon application of the DMVZ-method),  the Hilberdink and Neamah theorem could not have been used to exclude the possibility that the abscissa of convergence $\sigma_c$ of $\sum_{k=1}^{\infty} \mu(n_{k}) n_{k}^{-s}$  satisfies $\sigma_{c}< 1/2$ and that $1/\zeta_{\MP}(s)$ has additional zeros $s=\sigma+\I t$ with $\sigma_{c}<\sigma\le1/2$.
\end{remark}

\begin{remark} Let $\MP$ be a generalized prime number system like in Theorem \ref{th: [0,1/2]-system}. Another example of a general Dirichlet series with abscissa of convergence $1/2$ and with a unique zero in the half plane $\sigma>1/2$ is that of the Liouville function associated with the generalized number system. Its Liouville function, with sum function $L_{\MP}(x)=\sum_{n_{k}\leq x} \lambda(n_k)$, can be defined via the identity
$$
\sum_{k=0}^{\infty} \frac{\lambda(n_k)}{n_{k}^{s}}=\frac{\zeta_{\MP}(2s)}{\zeta_{\MP}(s)},
$$
so that its Dirichlet series has a zero at $s=1$.
Clearly, we have
$$
L_{\MP}(x)=\sum_{n^{2}_{k}\leq x} M_{\MP}(x/n^{2}_{k})\ll x^{1/2}\exp(c(\log x)^{2/3}) \sum_{n_{k}\leq \sqrt{x}}\frac{1}{n_{k}}\ll x^{1/2}\exp(c(\log x)^{2/3}) \log x.
$$
Furthermore, the estimate \eqref{eq: behavior Pi} and (the proof of) \cite[Proposition 19]{Neamah2020} imply
$$
L_{\MP}(x)=\Omega(\sqrt{x}),
$$
 which completes the proof of our claim.
\end{remark}

\begin{remark}
The bound $\Pi_{\MP}(x) - \Li(x) \ll \log\log x$ implies that $\zeta_{\mathcal{P}}$ has meromorphic continuation to $\sigma >0$, and that it has one simple pole at $s=1$ and no other zeros there. The equality $\beta=\gamma=1/2$ implies that both $\zeta_{\mathcal{P}}$ and $1/\zeta_{\mathcal{P}}$ must have infinite order in the strip $0<\sigma<1/2$. (However, using convexity arguments one might show that $\zeta_{\MP}$ and $1/\zeta_{\MP}$ are of polynomial growth in the region $\sigma > 1/2 - 1/\log(\,\abs{t}+2)$.)
\end{remark}

%%%%%%%%%%%%%%%%%%%%%%%%%%%%%%%%%%%%%%%%%%%%%%%%%%%%%%%%%%%%%%%%%%%%%%%%%%%%%
% THIRD APPLICATION
%%%%%%%%%%%%%%%%%%%%%%%%%%%%%%%%%%%%%%%%%%%%%%%%%%%%%%%%%%%%%%%%%%%%%%%%%%%%%
\section{A Beurling number system with highly regular primes but integers with large oscillation}
\label{sec: third application}

In \cite{BrouckeDebruyneVindas2020}, the authors showed the existence of a Beurling prime system $\mathcal{P}$ for which
\[
	\pi_{\MP}(x) = \Li(x) + O(\sqrt{x}) \quad \text{and} \quad N_{\MP}(x) = \rho x + \Omega_{\pm}\bigl(x\exp(-c\sqrt{\log x\log\log x})\bigr)
\]
for any $c>2\sqrt{2}$,
where $\rho>0$ is the asymptotic density of $N_{\MP}$. This was done by first considering a continuous number system $(\Pi_{c}, N_{c})$, for which $\Pi_{c}$ and $N_{c}$ have the desired asymptotic behavior, and then discretizing this continuous system with the aid of a variant of the DMVZ-method, supplemented with a specific technique to control the argument of the zeta function. The continuous prime system is given\footnote{An integer distribution function $N$ is uniquely determined by $\Pi$. Explicitly one has $\dif N = \exp^{\ast}(\dif\Pi)$, where the exponential is with respect to the multiplicative convolution of measures (see e.g.\ \cite[Chapter 3]{DiamondZhangbook}).} by 
\[
	\Pi_{c}(x) = \Li(x) + \sum_{k=0}^{\infty}R_{k}(x), \quad 
	R_{k}(x) = \begin{cases} 
				\sin(\tau_{k}\log x)		& \text{for } \tau_{k}^{1+\delta_{k}} < x \le \tau_{k}^{\nu_{k}},\\
				0					& \text{otherwise}.
			\end{cases}
\]
Here $\{\tau_{k}\}_{k=0}^{\infty}$ is a rapidly increasing sequence, $\delta_{k}=(\log\log \tau_{k} + a_{k})/\log \tau_{k}$, and $\{a_{k}\}_{k=0}^{\infty}$ and $\{\nu_{k}\}_{k=0}^{\infty}\subset (2,3)$ are bounded sequences chosen such that $\Pi_{c}$ is (absolutely) continuous. For a detailed analysis of this continuous example, we refer to \cite{BrouckeDebruyneVindas2020}, where additional technical assumptions are imposed on the sequences $\{\tau_{k}\}_{k=0}^{\infty},$ $\{a_{k}\}_{k=0}^{\infty}$, and $\{\nu_{k}\}_{k=0}^{\infty}$ in order to achieve the needed extremal behavior of its zeta function $\zeta_{c}(s)=\int_{1^{-}}^{\infty}x^{-s}\dif N_{c}(x)= \exp\left(\int_{1^{-}}^{\infty}x^{-s}\dif \Pi_{c}(x)\right)$.

We now show,
\begin{theorem}
There exist discrete Beurling prime systems $\mathcal{P}_{1}$ and $\mathcal{P}_{2}$ which satisfy
\[
	\pi_{\MP_1}(x) = \Li(x) + O(1) \quad \mbox{and} \quad \Pi_{\MP_2}(x) = \Li(x) + O(\log\log x),
\]
and for any $c>2\sqrt{2}$, and $j=1,2$,
\begin{equation}
\label{eq: oscillation estimate}
	N_{\MP_j}(x) = \rho_{j}x + \Omega_{\pm}\bigl(x\exp(-c\sqrt{\log x\log\log x})\bigr),
\end{equation}
where $\rho_{j}>0$ is the asymptotic density of the generalized integer counting function $N_{\MP_j}$.
\end{theorem}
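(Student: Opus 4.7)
The overall strategy is to redo the proof of \cite[Theorem 1.1]{BrouckeDebruyneVindas2020} replacing the DMVZ-method by Theorem \ref{th: discretization}. Both theorems produce the \emph{same} exponential sum bound \eqref{eq: bound exponential sum}, so the contour integration arguments used there to transfer the extremal behavior of the continuous zeta $\zeta_c$ on $\Re s = 1/2$ to the discretized $\zeta_{\MP}$, and hence the $\Omega_{\pm}$-oscillation of $N_{\MP}$, carry over with no essential change. The novelty is solely the $O(1)$ pointwise control on $\pi_{\MP} - F$, which is what upgrades the error term in the prime asymptotics from $O(\sqrt{x})$ to $O(1)$ or $O(\log\log x)$.

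For $\MP_2$ I would apply Theorem \ref{th: discretization} to the continuous primes function $\pi_c$ (obtained from $\Pi_c$ by M\"{o}bius inversion). This yields $|\pi_{\MP_2}(x) - \pi_c(x)| \le 1$, and summing over prime powers gives $|\Pi_{\MP_2}(x) - \Pi_c(x)| = O(\log\log x)$. Since the rapid growth of $\{\tau_k\}$ makes the supports of the $R_k$ pairwise disjoint, at each $x$ at most one $R_k(x)$ is nonzero, so $\sum_k R_k = O(1)$ and therefore
\[
\Pi_{\MP_2}(x) = \Pi_c(x) + O(\log\log x) = \Li(x) + O(\log\log x).
\]

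For $\MP_1$ I would instead apply Theorem \ref{th: discretization} to $F_1(x) := \Li(x) + \sum_{k=0}^{\infty} R_k(x)$. The technical conditions on the parameters $a_k$ in \cite{BrouckeDebruyneVindas2020} that make $\Pi_c$ non-decreasing apply verbatim to $F_1$, and $F_1$ is manifestly continuous with $F_1(1) = 0$ and $F_1(x) \ll x/\log x$. Theorem \ref{th: discretization} then produces $|\pi_{\MP_1}(x) - F_1(x)| \le 1$, and again $|\sum_k R_k| \le 1$ gives $\pi_{\MP_1}(x) = \Li(x) + O(1)$. The associated continuous template for the integers is $\tilde{\Pi}(x) = \sum_{\nu \ge 1} F_1(x^{1/\nu})/\nu$, and a direct computation shows
\[
\log \tilde{\zeta}(s) - \log \zeta_c(s) = \sum_{\nu \ge 2}\frac{L(\nu s)}{\nu} + \sum_{\nu \ge 2}\frac{1}{\nu} \sum_{k} \int y^{-\nu s}\dif R_k(y),
\]
with $L(s) = \log(s/(s-1))$; at the critical points $s = 1/2 + \I \tau_k$ the correction terms are off-resonant for the primary peaks and are moreover of strictly lower order than the dominant $\nu = 1$ contribution, so $\log \tilde{\zeta}$ exhibits the same extremal behavior at $1/2 + \I \tau_k$ as $\log\zeta_c$, and the oscillation analysis of \cite{BrouckeDebruyneVindas2020} applies unchanged.

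The principal obstacle is the one already faced in \cite{BrouckeDebruyneVindas2020}: the bound \eqref{eq: bound exponential sum} controls only $|\log\zeta_{\MP}-\log\zeta_c|$, whereas transferring the $\Omega$-type extremal behavior of $|\zeta_c|$ (or $|1/\zeta_c|$) at $1/2 + \I \tau_k$ to $|\zeta_{\MP}|$ also requires controlling the \emph{argument} of the discretized zeta at these points. Because Theorem \ref{th: discretization} produces the same exponential sum bound as the DMVZ-method, the argument-control technique of \cite{BrouckeDebruyneVindas2020} is inherited with no modification, and the rest of the proof proceeds as in loc.\ cit.
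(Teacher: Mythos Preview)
Your overall strategy matches the paper's: apply Theorem~\ref{th: discretization} with $F=\Pi_c$ for $\MP_1$ and with $F=\pi_c$ for $\MP_2$, then import the oscillation machinery from \cite{BrouckeDebruyneVindas2020}. A few points need correction, however. First, the extremal behaviour of $\zeta_c$ relevant for the oscillation \eqref{eq: oscillation estimate} occurs near the line $\sigma=1$ (the paper's argument control is at $1+\I\tau_{k_l}$), not at $\sigma=1/2$; an $\Omega_\pm(x\exp(-c\sqrt{\log x\log\log x}))$ error corresponds to large values of $\zeta$ at points whose real parts approach $1$ from the left. Second, you identify only one of the two extra ingredients beyond \eqref{eq: bound exponential sum}. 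Besides the argument control, the paper also needs the sharpened bound
\[
S(x,t)-S_c(x,t)\ll \sqrt{x}(\log\tau_k)^{1/4}\quad\text{uniformly for }|t-\tau_k|\le \exp\bigl(d(\log\tau_k/\log\log\tau_k)^{1/3}\bigr),
\]
and this is \emph{not} a consequence of \eqref{eq: bound exponential sum}: it requires re-entering the probabilistic construction and adding a second family of events $B_{k,j}$ to the Borel--Cantelli argument. Your sentence ``the bound \eqref{eq: bound exponential sum} controls only $|\log\zeta_{\MP}-\log\zeta_c|$'' is also slightly off---a bound on $|\log(\zeta_{\MP}/\zeta_c)|$ does control the argument too; the genuine issue is that the generic bound is not sharp enough near $\tau_k$.

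Two smaller remarks. For $\MP_2$ you must verify that $\pi_c$ is non-decreasing; this is not automatic from the monotonicity of $\Pi_c$, and the paper spends a paragraph on it (using that for each $x$ the intervals $I_{k,n}$ containing $x$ are nested in $k$, so the sum of $|r'_{k,n}(x)|$ is dominated by $\li'(x)$ after enlarging $a_k$). Finally, your detour through $\tilde\Pi$ and $\tilde\zeta$ for $\MP_1$ is unnecessary: since $F_1=\Pi_c$, the exponential-sum bound already compares $\int x^{-s}\dif\pi_{\MP_1}$ directly with $\int x^{-s}\dif\Pi_c=\log\zeta_c$, and the higher prime powers contribute a term analytic and bounded for $\sigma>1/2$, exactly as in \cite{BrouckeDebruyneVindas2020}.
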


\begin{remark}
In view of its closeness to the `most natural' continuous number system $\Pi_{0} (x)= \Li(x)$ and $N_{0}(x)= x$, the primes of the system $\mathcal{P}_{2}$ might be considered to be more regular than those of $\mathcal{P}_{1}$. Note that the zeta function associated to $\mathcal{P}_{2}$ has no zeros in the right half plane $\{s:\:\Re s>0\}$, as is also the case for the number system from Theorem \ref{th: [0,1/2]-system}.
\end{remark}

\begin{proof}
The prime systems $\mathcal{P}_{1}$ and $\mathcal{P}_{2}$ will be obtained by applying (a variant of) Theorem \ref{th: discretization} to $F=\Pi_{c}$ and $F=\pi_{c}$, respectively, where $\pi_{c}$ is such that $\Pi_{c}(x) = \sum_{n=1}^{\infty}\pi_{c}(x^{1/n})/n$. We have the formula
\[
	\pi_{c}(x) = \li(x) + \sum_{k=0}^{\infty}\sum_{n=1}^{\infty}r_{k,n}(x), 
\]
where (cf. Section \ref{sec: First application})
\begin{align*}
	\li(x) 		&= \sum_{n=1}^{\infty}\frac{(\log x)^{n}}{n!n\zeta(n+1)} \quad \mbox{and}\\ 
	r_{k,n}(x) 	&= \frac{\mu(n)}{n}R_{k}(x^{1/n}) 
	= \begin{cases}
		\frac{\mu(n)}{n}\sin\bigl((\tau_{k}/n)\log x\bigr)	&\text{for } (\tau_{k}^{1+\delta_{k}})^{n} < x \le (\tau_{k}^{\nu_{k}})^{n},\\
		0									&\text{otherwise}.
	\end{cases}
\end{align*}

Let us first verify that $\pi_{c}$ satisfies the requirements of Theorem \ref{th: discretization}. We only have to show that it is non-decreasing, the rest is clear. Set $I_{k,n} = \left[ (\tau_{k}^{1+\delta_{k}})^{n}, (\tau_{k}^{\nu_{k}})^{n}\right]$, and let $x\ge 1$ be fixed. We have the following elementary observations:
\begin{enumerate}
	\item[a)] for every $n\ge 1$ there exists at most one $k$ such that $x\in I_{k,n}$;
	\item[b)] if $n_{1}\le n_{2}$ and $k_{1}, k_{2}$ are such that $x\in I_{k_{1}, n_{1}} \cap I_{k_{2}, n_{2}}$, then $k_{1}\ge k_{2}$.
\end{enumerate}
The first observation is a consequence of $I_{k, 1} \cap I_{k+1, 1} = \varnothing$, which holds provided that the sequence $\{\tau_{k}\}_{k=0}^{\infty}$ grows sufficiently rapidly. The second property follows from the fact that if $k_{1}<k_{2}$, then $\tau_{k_{1}}^{\nu_{k_{1}}n_{1}} < \tau_{k_{2}}^{(1+\delta_{k_{2}})n_{1}} \le \tau_{k_{2}}^{(1+\delta_{k_{2}})n_{2}}$. Now $\pi_{c}$ is absolutely continuous, so it will follow that it is non-decreasing if we show that $\pi_{c}'$ is non-negative. We have
\[
	\abs[3]{\biggl(\sum_{k=0}^{\infty}\sum_{n=1}^{\infty}r_{k,n}(x)\biggr)'} \le \frac{1}{x}\sum_{\substack{k,n \\ x\in I_{k,n}}}\frac{\tau_{k}}{n^{2}}.
\]
Let $m$ be the smallest integer $\ge1$ such that $x\in I_{k,m}$ for some $k$, and let $K$ be such that $x\in I_{K,m}$. By a) and b), the above quantity is bounded by 
\[
	\frac{1}{x}\sum_{n=m}^{\infty}\frac{\tau_{K}}{n^{2}} \le \frac{2\tau_{K}}{mx} \le \frac{2}{m}\tau_{K}^{-\delta_{K}} = \frac{2}{m\e^{a_{K}}\log\tau_{K}},
\]
where the last inequality follows from the fact that $x\in I_{K,m}$. Also, 
\[
	\li'(x) \ge \frac{1}{\zeta(2)}\frac{1-x^{-1}}{\log x} \ge \frac{1}{2\zeta(2)\log x} \ge \frac{1}{2m\zeta(2)\nu_{K}\log\tau_{K}},
\]
if $x\in I_{K,m}$. Hence $\pi_{c}'(x) \ge 0$ if we impose that $a_{k}\ge \log(12\zeta(2))$ say (recall that $\nu_{k}\le 3$).	

If we now apply Theorem \ref{th: discretization} with $F = \Pi_{c}$ and $F = \pi_{c}$, we obtain two prime systems $\MP_{1}$ and $\MP_{2}$ with $\pi_{\MP_1}(x) = \Pi_{c}(x) + O(1)=\Li(x)+O(1)$ and $\pi_{\MP_2}(x) = \pi_{c}(x) + O(1)$, so that as in the proof of Theorem \ref{th: [0,1/2]-system}, $\Pi_{\MP_2}(x) = \Pi_{c}(x) + O(\log\log x) = \Li(x) + O(\log\log x)$.
To deduce the oscillation estimate \eqref{eq: oscillation estimate} from the behavior of the zeta functions $\zeta_{\MP_1}$ and $\zeta_{\MP_2}$, two additional properties alongside \eqref{eq: bound exponential sum} were required, namely that
\[
	S(x,t) -S_{c}(x,t) \ll \sqrt{x}(\log\tau_{k})^{1/4}, \quad \text{uniformly for } \abs{t-\tau_{k}} \le \exp\biggl(d\biggl(\frac{\log\tau_{k}}{\log\log\tau_{k}}\biggr)^{1/3}\biggr),
\]	
for a certain specified constant $d>0$ and $x$ and $k$ large enough, and that 
\[
	\abs{\arg \frac{\zeta_{\mathcal{P}}(1+\I\tau_{k_{l}})}{\zeta_{c}(1+\I\tau_{k_{l}})}} <  \frac{\pi}{80}, \quad \mbox{on an (infinite) subsequence $\{\tau_{k_{l}}\}$ of $\{\tau_{k}\}$}.
\]
Both of these additional properties can be obtained as in \cite{BrouckeDebruyneVindas2020}. For the first one, one modifies the proof of Theorem \ref{th: discretization} and also considers the events $B_{k,j}$, corresponding to the violation of the above bound for $x=q_{j}$ and $t$ close to $\tau_{k}$. Using the rapid growth of the sequence $\{\tau_{k}\}_{k=0}^{\infty}$, one then shows that the sum of the probabilities of the events $B_{k,j}$ is also finite. For the second property, one adds a finite number of well chosen primes around $80/\pi$, which shift the phase of the zeta function at the points $1+\I\tau_{k_{l}}$ to the desired range. We choose to omit further details and instead refer to \cite[Section 6]{BrouckeDebruyneVindas2020} for an account on both methods.
\end{proof}
%%%%%%%%%%%%%%%%%%%%%%%%%%%%%%%%%%%%%%%%%%%%%%%%%%%%%%%%%%%%%%%%%%%%%%%%%%%%%
% BIBLIOGRAPHY
%%%%%%%%%%%%%%%%%%%%%%%%%%%%%%%%%%%%%%%%%%%%%%%%%%%%%%%%%%%%%%%%%%%%%%%%%%%%%

\end{document}